\documentclass[11pt]{amsart}
%%for arXiv
\pdfoutput=1
\usepackage{amsmath}
\usepackage{amssymb}
\usepackage{amsthm}
\usepackage[foot]{amsaddr}
\usepackage[margin=1in]{geometry}
\usepackage[hidelinks]{hyperref}
\usepackage{bm}
\usepackage[Symbol]{upgreek}
\usepackage{mathtools}

\usepackage{lmodern}
\usepackage[T1]{fontenc}
\usepackage{microtype}

\DeclareMathOperator{\ddeg}{ddeg}
\DeclareMathOperator{\ndeg}{ndeg}

\def\d {\operatorname{d}}

\def\ac {\operatorname{ac}}
\def\h {\operatorname{h}}

\newcommand{\ca}{\mathcal}

\newcommand{\pconv}{\rightsquigarrow}
\newcommand{\prece}{\preccurlyeq}
\newcommand{\succe}{\succcurlyeq}

\newcommand{\f}{\mathfrak}

\newcommand{\x}{\times}

\newcommand{\ges}{\geqslant}
\newcommand{\les}{\leqslant}

\newcommand{\N}{\mathbb{N}}

\newcommand{\Q}{\mathbb{Q}}
\newcommand{\R}{\mathbb{R}}
\newcommand{\T}{\mathbb{T}}

% Definition of symbol \der for derivation:
\DeclareFontFamily{U}{fsy}{}
\DeclareFontShape{U}{fsy}{m}{n}{<->s*[.9]psyr}{}
\DeclareSymbolFont{der@m}{U}{fsy}{m}{n}
\DeclareMathSymbol{\der}{\mathord}{der@m}{182}
% Definition of symbol \cao for small calligraphic "o":
\DeclareFontFamily{OMS}{smallo}{}
\DeclareFontShape{OMS}{smallo}{m}{n}{<->s*[.65]cmsy10}{}
\DeclareSymbolFont{smallo@m}{OMS}{smallo}{m}{n}
\DeclareMathSymbol{\cao}{\mathord}{smallo@m}{79}

\newtheorem{thmint}{Theorem}

\newtheorem{lem}{Lemma}[section]
\newtheorem{prop}[lem]{Proposition}
\newtheorem{cor}[lem]{Corollary}
\newtheorem{thm}[lem]{Theorem}

\theoremstyle{definition}

\newtheorem*{defn}{Definition}
%\newtheorem{defn}[lem]{Definition}

%No indent after section heading
\usepackage{etoolbox}

\makeatletter
\patchcmd{\@startsection}
  {\@afterindenttrue}
  {\@afterindentfalse}
  {}{}
\makeatother

\title[Newtonian valued differential fields with arbitrary value group]{Newtonian valued differential fields\\ with arbitrary value group}
\author{Nigel Pynn-Coates}
\address{Department of Mathematics, University of Illinois at Urbana--Champaign, Urbana, IL 61801, USA}
\email{\href{mailto:pynncoa2@illinois.edu}{pynncoa2@illinois.edu}}
%\urladdr{\url{https://faculty.math.illinois.edu/~pynncoa2}}
% \thanks{Keywords: valued differential fields, asymptotic fields, newtonian fields. MSC2010: 12J10, 12H05}
% \date{\today}

\begin{document}

\begin{abstract}
The notion of newtonianity is central to the study of the ordered differential field of logarithmic-exponential transseries done by Aschenbrenner, van den Dries, and van der Hoeven; see \cite[Chapter~14]{adamtt}.
We remove the assumption of divisible value group from two of their results concerning newtonianity, namely the newtonization construction and the equivalence of newtonianity with asymptotic differential-algebraic maximality.
We deduce the uniqueness of immediate differentially algebraic extensions that are asymptotically differential-algebraically maximal.
\end{abstract}

\maketitle

\section{Introduction}

The ordered (valued) differential field of logarithmic-exponential transseries, $\T$, was introduced by Ecalle in his proof of Dulac's conjecture \cite{ecalle1,ecalle2}.
An important notion in studying its model theory is that of ``newtonianity.''
Developed by Aschenbrenner, van den Dries, and van der Hoeven in their breakthrough work on $\T$ \cite{adamtt}, newtonianity is a generalization of henselianity to a certain class of asymptotic valued differential fields.\footnote{There is another generalization of henselianity to the valued differential field setting called ``differential-henselianity,'' introduced by Scanlon in \cite{scanlon} and developed in a more general setting in \cite{adamtt}. Although many asymptotic fields, including $\T$, cannot be differential-henselian, there is a relationship between these two notions; see \cite[\S14.1]{adamtt}.}
Recall that henselianity is a fundamental notion in valuation theory about lifting simple roots from the residue field to the main field.
For example, Hahn fields and $p$-adic fields are henselian.
That the elementary theory of a henselian valued field of residue characteristic 0 is determined by the elementary theories of its residue field and value group is a famous result of Ax--Kochen \cite{ak3} and Ershov \cite{ershov}.

Useful in proving this theorem are the facts that valued fields have henselizations, and, for valued fields of equicharacteristic 0, henselianity is equivalent to algebraic maximality.
Aschenbrenner, van den Dries, and van der Hoeven prove analogous results for a certain class of asymptotic valued differential fields,  namely that such fields have newtonizations, and that newtonianity is equivalent to asymptotic differential-algebraic maximality.
These results played an important role in the proof of quantifier elimination for $\T$.
That newtonianity is the right generalization to this class of asymptotic valued differential fields is evidenced by its centrality to the study of $\T$, including as part of its axiomatization \cite{adamtt}.

In this note, we remove the assumption of divisible value group from these two results, which we do by essentially removing it from the main technical lemma used in their proofs, \cite[Proposition~14.5.1]{adamtt} (Proposition~\ref{adh14.5.1} below).
The idea is to pass to the algebraic closure, where the value group is divisible, but the chief difficulty is in ensuring that minimal differential polynomials of pseudocauchy sequences remain such; this is done in Lemma~\ref{mdpac}.
For technical reasons, henselianity is assumed there, so arguments using henselizations are required in deducing the main results from Proposition~\ref{mainlemmah}.

Before stating the main theorems, we recall some necessary definitions, following the notation and conventions of \cite{adamtt}.
A \emph{valued differential field} is a field $K$ together with a surjective map $v \colon K^{\x} \to \Gamma$ and a map $\der \colon K \to K$ satisfying, for all $x$, $y$ in their domain:
\pagebreak
\begin{enumerate}
	\item[(V1)] $v(xy) = v(x)+v(y)$;
	\item[(V2)] $v(x+y) \ges \min\{v(x), v(y)\}$ whenever $x+y \neq 0$;
	\item[(D1)] $\der(x+y)=\der(x)+\der(y)$;
	\item[(D2)] $\der(xy)=x\der(y)+\der(x)y$.
\end{enumerate}
Here, $\Gamma$ is a (totally) ordered abelian group called the $\emph{value group}$ of $K$.
We introduce a symbol $\infty \notin \Gamma$ and extend the ordering to $\Gamma \cup \{\infty\}$ by $\infty>\Gamma$.
We also set $\infty+\gamma=\gamma+\infty = \infty+\infty \coloneqq \infty$ for all $\gamma \in \Gamma$.
This allows us to extend $v$ to $K$ by setting $v(0) \coloneqq \infty$.
Other notation includes the \emph{valuation ring} $\ca O \coloneqq \{x \in K : v(x) \ges 0\}$ with (unique) maximal ideal $\cao \coloneqq \{x \in K : v(x)>0\}$.
Then $\bm k \coloneqq \ca O/\cao$ is the \emph{residue field} of $K$; throughout, we assume that $\bm k$ has characteristic $0$.
As it is often more intuitive, we define for $a$, $b \in K$:
\[\begin{array}{lc}
a \prece b\ \Leftrightarrow\ v(a)\ges v(b),\qquad a \prec b\ \Leftrightarrow\ v(a)> v(b),\\
a \asymp b\ \Leftrightarrow\ v(a)=v(b),\qquad  
  a\sim b\ \Leftrightarrow\ a-b \prec b.
\end{array}\]
For $a \in K$, we often denote $\der(a)$ by $a'$, and we let $C \coloneqq \{ a \in K : \der(a)=0\}$ be the \emph{constant field} of~$K$.
For the rest of this note, let $K$ be a nontrivially valued differential field with the above notation and conventions.
For other valued differential fields, we decorate these symbols with subscripts.
For example, we say a valued differential field extension $L$ of $K$ is \emph{immediate} if $\Gamma_L = \Gamma$ and $\bm k_L = \bm k$, where we identify $\Gamma$ with a subgroup of $\Gamma_L$ and $\bm k$ with a subfield of $\bm k_L$ in the usual way.
Later, we abbreviate ``valued differential field extension'' by \emph{extension}.

Of course, we are interested in the case when there is some interaction between the valuation and the derivation.
The main basic condition imposed here is that $K$ is \emph{asymptotic}, that is, for all nonzero $f$, $g \in \cao$,
\[ f \prec g\ \iff\ f' \prec g'. \]
For the rest of the introduction, $K$ will be an asymptotic field; we impose further restrictions at the end of this section.
It is immediate that $C \subseteq \ca O$, that is, $K$ has \emph{few constants} in the sense of~\cite[\S4.4]{adamtt}.
Another consequence of being asymptotic is that there is $a \in K^{\x}$ such that $\der\cao \subseteq a\cao$, which means that $\der$ is continuous with respect to the valuation topology on $K$ (see \cite[Lemma~4.4.7 and Corollary~9.1.5]{adamtt}).
If $a=1$ works, then we say $K$ has \emph{small derivation}.

In asymptotic fields, logarithmic derivatives play an important role, so we let $g^\dagger \coloneqq g'/g$ for $g \in K^\x$.
In fact, $v(g^\dagger)$ and $v(g')$ depend only on $vg$, so for $\gamma = vg \in \Gamma^{\neq} \coloneqq \Gamma \setminus \{0\}$, we set $\gamma^\dagger \coloneqq v(g^\dagger)$ and $\gamma' \coloneqq v(g') = \gamma^\dagger+\gamma$.
We then define a valuation $\psi$ on $\Gamma$ in the sense of \cite[\S2.2]{adamtt} by:
\begin{align*}
\psi \colon \Gamma^{\neq} &\to \Gamma\\
\gamma &\mapsto \gamma^\dagger.
\end{align*}
We call $(\Gamma, \psi)$ the \emph{asymptotic couple} of~$K$ and let $\Psi \coloneqq \psi(\Gamma^{\neq})$.
Then $K$ is said to be \emph{ungrounded} if~$\Psi$ has no greatest element.
We say that $K$ is \emph{$H$-asymptotic} if~$\psi$ is a convex valuation as defined in \cite[\S2.4]{adamtt}, that is, $\psi(\alpha) \ges \psi(\beta)$ whenever $0<\alpha \les \beta$.

The broad class of asymptotic fields includes differential-valued fields, introduced by Rosenlicht~\cite{rosendval}, such as $\T$ and Hardy fields extending~$\R$.
We say~$K$ is \emph{differential-valued} (\emph{$\d$-valued} for short) if $\ca O = C + \cao$.
We are concerned here mainly with ungrounded $H$-asymptotic fields that are also $\d$-valued, but we occasionally relax this last condition a little.
Ungrounded asymptotic fields are in fact \emph{pre-differential-valued} (\emph{pre-$\d$-valued} for short): for all $f$, $g \in K^{\x}$, $f' \prec g^\dagger$ whenever $f \prece 1$ and $g \prec 1$ \cite[Corollary 10.1.3]{adamtt}.

Apart from being an ungrounded $H$-asymptotic $\d$-valued field, two of the main properties of $\T$ are newtonianity, already mentioned, and $\upomega$-freeness.
The former has a somewhat technical definition that we delay until \S\ref{prelim}.
But granted that, we can define newtonizations; in this definition, we require~$K$ to be ungrounded and $H$-asymptotic.
We say an ungrounded $H$-asymptotic extension~$L$ of~$K$ is a \emph{newtonization} of $K$ if it is newtonian and embeds over $K$ into every ungrounded $H$-asymptotic extension of $K$ that is newtonian.

\begin{defn}
We say an ungrounded $H$-asymptotic field $K$ is \emph{$\upomega$-free} if for all $f \in K$, there is $g \in K$ such that $g \succ 1$ and
\[
f+\big(2(-g^{\dagger\dagger})'+(g^{\dagger\dagger})^2\big) \succe (g^{\dagger})^2.
\]
\end{defn}
Thus $\upomega$-freeness can be expressed as a universal-existential sentence in the language of valued differential fields, but it is also equivalent to the absence of a pseudolimit of a certain pseudocauchy sequence $(\upomega_\rho)$ related to iterated logarithms, hence the ``free'' in the name.
In $\T$, the sequence $(\upomega_n)$ is indexed by $\N$ and defined by:
\[\upomega_n\ =\ \frac{1}{\left(\ell_0\right)^2} + \frac{1}{\left(\ell_0\ell_1\right)^2} + \dots + \frac{1}{\left(\ell_0\ell_1\dots\ell_n\right)^2},\]
where $\ell_0 \coloneqq x$ and $\ell_{n+1} \coloneqq \log\ell_n$; here and throughout we let $n$ and $r$ range over $\N = \{0, 1, 2, \dots\}$.
See \cite[Corollary~11.7.8]{adamtt} and the surrounding pages for more on these equivalent definitions.
The definition of $\upomega$-freeness is not specifically used in this note.

We can now state the main theorems.
The first connects newtonianity to asymptotic $\d$-algebraic maximality:
$K$ is said to be \emph{asymptotically differential-algebraically maximal} (\emph{asymptotically $\d$-algebraically maximal} for short) if it has no proper immediate $\d$-algebraic extension that is asymptotic.
Here and throughout, we abbreviate ``differentially algebraic'' by \emph{$\d$-algebraic}.
Note that any $K$ satisfying the assumptions of the first theorem is in fact $\d$-valued by \cite[Lemma~14.2.5]{adamtt} and Lemma~\ref{omegalambda}.
\begin{thmint}\label{mainequiv}
If $K$ is an ungrounded $H$-asymptotic field that is $\upomega$-free and newtonian, then it is asymptotically $\d$-algebraically maximal.
\end{thmint}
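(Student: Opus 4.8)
The plan is to prove the contrapositive in spirit: assuming $K$ is ungrounded $H$-asymptotic, $\upomega$-free, and newtonian, I want to show that any asymptotic immediate $\d$-algebraic extension $L$ of $K$ equals $K$. The key observation, standard in this subject, is that a proper immediate extension is witnessed by a divergent pseudocauchy sequence: if $L \supsetneq K$ is immediate, pick $a \in L \setminus K$; then since $\bm k_L = \bm k$ and $\Gamma_L = \Gamma$, there is a pseudocauchy sequence $(a_\rho)$ in $K$ that pseudoconverges to $a$ but has no pseudolimit in $K$. Since $L$ is $\d$-algebraic over $K$, the element $a$ is $\d$-algebraic over $K$, so $(a_\rho)$ is of $\d$-algebraic type over $K$; let $P \in K\{Y\}$ be a minimal differential polynomial of $(a_\rho)$ over $K$. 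So it suffices to show: if $K$ is ungrounded $H$-asymptotic, $\upomega$-free, and newtonian, then every pseudocauchy sequence in $K$ of $\d$-algebraic type over $K$ has a pseudolimit in $K$.

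The heart of the matter is to reduce this to newtonianity via the machinery developed in the paper — concretely, to Proposition~\ref{adh14.5.1} (the analogue of \cite[Proposition~14.5.1]{adamtt}), which governs how minimal differential polynomials of pseudocauchy sequences behave, together with the newtonianity hypothesis. The rough structure: given $(a_\rho)$ of $\d$-algebraic type with minimal annihilator $P$, I would use $\upomega$-freeness to arrange that $P$ (after a suitable additive and multiplicative conjugation, and passing to a cofinal subsequence) has the form needed to apply the Newton-diagram analysis — i.e.\ its Newton degree drops appropriately and one can locate a ``Newton position'' — and then newtonianity produces a zero of $P$ in $K$ that serves as a pseudolimit of $(a_\rho)$. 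The $\upomega$-freeness is what guarantees the relevant invariants stabilize along the sequence so that the Newton-degree bookkeeping works; this is exactly the role it plays in \cite[Chapter~14]{adamtt}.

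The main obstacle — and the whole point of this note — is that the classical argument assumes $\Gamma$ is divisible, whereas here it is arbitrary. So the actual proof cannot invoke \cite[Proposition~14.5.1]{adamtt} directly; it must go through the generalized version, Proposition~\ref{adh14.5.1}, whose proof in turn rests on passing to the algebraic closure $K^{\mathrm a}$ (where the value group becomes divisible) while controlling minimal differential polynomials of pseudocauchy sequences under this base change — this is precisely Lemma~\ref{mdpac}, and it is where henselianity must be assumed. Consequently, the honest route to Theorem~\ref{mainequiv} is: first establish the result under the extra hypothesis of henselianity (this is Proposition~\ref{mainlemmah}), and then remove henselianity by a henselization argument — one replaces $K$ by its henselization $K^{\h}$, which is an immediate extension, checks that $K^{\h}$ inherits being ungrounded $H$-asymptotic, $\upomega$-free, and newtonian (newtonianity transfers to immediate extensions under these hypotheses by results in \cite{adamtt}), applies the henselian case to conclude $K^{\h}$ is asymptotically $\d$-algebraically maximal, and then deduces the same for $K$ since any immediate asymptotic $\d$-algebraic extension of $K$ would, after base change to $K^{\h}$, contradict maximality of $K^{\h}$. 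I expect the henselization transfer steps to be routine given the cited results, and the genuine content to be concentrated in Lemma~\ref{mdpac} and Proposition~\ref{adh14.5.1}, which are the tools the rest of the excerpt is building toward.
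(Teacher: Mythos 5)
Your overall plan correctly identifies the load-bearing ingredients: Lemma~\ref{mdpac} to control minimal differential polynomials under passage to $K^{\ac}$, Proposition~\ref{mainlemmah} (the henselian version of \cite[Proposition~14.5.1]{adamtt}), and then newtonianity via Lemma~\ref{adh14.1.10} to produce the pseudolimit. The reduction at the start (a proper immediate $\d$-algebraic extension yields a divergent pc-sequence of $\d$-algebraic type with a minimal differential polynomial) also matches the paper.

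However, the final henselization step you propose is an unnecessary detour that reflects a missed observation, and as described it contains an unsupported claim. The paper's point is that \emph{newtonianity already implies henselianity}, so a newtonian $K$ automatically satisfies the henselianity hypothesis of Proposition~\ref{mainlemmah}: one simply applies that proposition to $K$ directly, with no henselization in sight. (This is exactly what the remark preceding Theorem~\ref{newttoasympdalgmax} says.) Consequently $K^{\h}=K$, and the ``transfer of newtonianity to $K^{\h}$'' that you invoke is vacuous. Worse, as a freestanding claim it would need justification: there is no general result in \cite{adamtt} asserting that newtonianity passes up to immediate extensions, and the fact that the henselization of a newtonian field is newtonian holds only because that henselization is the field itself. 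The henselization arguments in the paper are genuinely needed, but for Theorems~\ref{mainembed} and~\ref{mainunique}, where $K$ is not assumed newtonian; for Theorem~\ref{mainequiv} they play no role. Replacing your henselization paragraph with the one-line observation ``newtonian $\Rightarrow$ henselian, so Proposition~\ref{mainlemmah} applies to $K$'' would make the argument both correct and aligned with the paper.
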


\begin{thmint}\label{mainembed}
If $K$ is an ungrounded $H$-asymptotic field that is $\upomega$-free and $\d$-valued, then it has a newtonization.
\end{thmint}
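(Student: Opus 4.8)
The plan is to follow the construction of the newtonization in \cite[\S14.5]{adamtt}, using the new Proposition~\ref{mainlemmah} in place of \cite[Proposition~14.5.1]{adamtt}. Since Proposition~\ref{mainlemmah} hypothesizes henselianity rather than a divisible value group, I would first reduce to the case that $K$ is henselian. Let $K^h$ be a henselization of $K$; the derivation extends uniquely to the immediate algebraic extension $K^h$, which remains ungrounded $H$-asymptotic, $\d$-valued, and $\upomega$-free by the standard preservation results for immediate $\d$-algebraic extensions \cite{adamtt}. I claim that any newtonization $L$ of $K^h$ is already a newtonization of $K$: it is a newtonian, ungrounded $H$-asymptotic extension of $K$, and if $M$ is any newtonian, ungrounded $H$-asymptotic extension of $K$, then $M$ is henselian (newtonian $H$-asymptotic fields are henselian, \cite[\S14.1]{adamtt}), so $K^h$ embeds into $M$ over $K$ by the universal property of the henselization; composing with the embedding $L \to M$ over $K^h$ gives an embedding over $K$. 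So assume from now on that $K$ is henselian.

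Next I would build $L$ by transfinite recursion, $K = F_0 \subseteq F_1 \subseteq \cdots$. At stage $\alpha$: if $F_\alpha$ is newtonian, set $L \coloneqq F_\alpha$ and stop. Otherwise $F_\alpha$ is $\upomega$-free but not newtonian, so it admits a newton pc-sequence $(a_\rho)$ with no pseudolimit in $F_\alpha$ (cf.\ \cite[Ch.~14]{adamtt}); this $(a_\rho)$ is of $\d$-algebraic type over $F_\alpha$, so Proposition~\ref{mainlemmah} provides an immediate $\d$-algebraic asymptotic extension $F_\alpha\langle a\rangle$ with $a$ a pseudolimit of $(a_\rho)$, and I let $F_{\alpha+1}$ be a henselization of $F_\alpha\langle a\rangle$. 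At limit stages take the henselization of the union. Each $F_\alpha$ is then an immediate $\d$-algebraic asymptotic extension of $K$, hence again ungrounded $H$-asymptotic, $\d$-valued, $\upomega$-free, and henselian. Immediate $\d$-algebraic extensions of $K$ have cardinality bounded by a fixed cardinal, so the recursion halts; at that point $L$ is newtonian and is an immediate $\d$-algebraic asymptotic extension of $K$ with all the stated properties.

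It remains to verify the embedding property of this $L$. Let $M$ be a newtonian, ungrounded $H$-asymptotic extension of $K$; as above $M$ is henselian. I would construct an embedding $L \to M$ over $K$ by recursion along the construction of $L$. Suppose $F_\alpha$ has been embedded over $K$ into $M$, and identify it with its image. At a henselization step the embedding extends uniquely. At a step $F_{\alpha+1} = $ (henselization of) $F_\alpha\langle a\rangle$ coming from a newton pc-sequence $(a_\rho)$ of $F_\alpha$: since $M$ is newtonian and $(a_\rho)$ is a newton pc-sequence of the subfield $F_\alpha$, it pseudo-converges in $M$, say to $b$; by the embedding part of Proposition~\ref{mainlemmah} the embedding extends to $F_\alpha\langle a\rangle$ by $a \mapsto b$, and then to its henselization. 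Taking unions at limit stages yields an embedding $L \to M$ over $K$, so $L$ is a newtonization of $K$.

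The essential new input --- and the real obstacle --- is Proposition~\ref{mainlemmah} itself, proved via Lemma~\ref{mdpac}, where divisibility of the value group is exchanged for henselianity; as the introduction indicates, the difficulty there is keeping minimal differential polynomials of pc-sequences minimal after passing to the algebraic closure. Granting that, the care needed in the argument above is bookkeeping: that henselization preserves $\upomega$-freeness, $\d$-valuedness, and the $H$-asymptotic couple; that ``newtonian implies henselian'' transfers the universal property from $K^h$ to $K$; and, in the embedding step, that although $M$ is \emph{not} assumed $\upomega$-free, its newtonianity still supplies pseudolimits of the newton pc-sequences used to build $L$ --- only that direction of the correspondence is needed for $M$, while the reverse direction (feeding the recursion) is available because each $F_\alpha$ is $\upomega$-free.
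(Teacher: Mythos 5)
Your plan is essentially the paper's proof of Theorem~\ref{newtonization}: reduce to henselian $K$ by embedding the henselization into any newtonian target (since newtonian implies henselian), obtain the immediate $\d$-algebraic newtonian $L$ (by the Zorn/transfinite argument you spell out together with Theorem~\ref{adh14.0.1}), and then extend an embedding one step at a time using Proposition~\ref{mainlemmah}. Two citation corrections are worth making, since they affect how the step is justified: Proposition~\ref{mainlemmah} has no ``embedding part'' and does not itself produce $F_\alpha\langle a\rangle$ --- it only gives $\ndeg_{\bm a}P=1$; the extension and its embedding property come from Lemma~\ref{adh11.4.8} together with Corollary~\ref{adh11.4.13}. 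Likewise, the passage from ``$M$ is newtonian'' to ``$(a_\rho)$ pseudoconverges in $M$ to a zero of $P$'' must go through $\ndeg_{\bm a_M}P=1$, which requires transferring newton degree from $F_\alpha$ to $M$ via the $\upomega$-freeness of $F_\alpha$ and \cite[Corollary~13.6.13]{adamtt}, after which Lemma~\ref{adh14.1.10} gives the pseudolimit/root; you allude to this in your last paragraph, but that transfer is the load-bearing step, not newtonianity of $M$ alone.
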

These are Theorems~\ref{newttoasympdalgmax} and \ref{newtonization} below.
The case that the value group is divisible is covered by \cite[Theorem~14.0.2 and Corollary~14.5.4]{adamtt}.
The converse of Theorem~\ref{mainequiv} holds for ungrounded $H$-asymptotic $K$ that are $\uplambda$-free, a weaker notion than $\upomega$-freeness to be defined in \S\ref{prelim}; see  Theorem~\ref{adh14.0.1} (\cite[Theorem~14.0.1]{adamtt}).

Finally, the next theorem is a corollary of the previous two results, but we provide an alternative proof in \S\ref{unique}.
The case that the value group is divisible is not stated in \cite{adamtt}, but follows from the corresponding results.
\begin{thmint}\label{mainunique}
If $K$ is an ungrounded $H$-asymptotic field that is $\upomega$-free and $\d$-valued, then any two immediate $\d$-algebraic extensions of $K$ that are asymptotically $\d$-algebraically maximal are isomorphic over $K$.
\end{thmint}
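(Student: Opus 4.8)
The plan is to read this off from Theorems~\ref{mainequiv} and \ref{mainembed} (the alternative, more self-contained argument via pseudocauchy sequences that the note records in \S\ref{unique} is sketched at the end). Let $K_1$ and $K_2$ be immediate $\d$-algebraic extensions of $K$ that are asymptotically $\d$-algebraically maximal. The first point is that each $K_i$, being immediate over $K$, is again ungrounded $H$-asymptotic, is $\d$-valued (elementary for immediate extensions, since the constants of $K$ already hit every residue), and is $\upomega$-free, because $\upomega$-freeness is inherited by immediate extensions \cite[Chapter~11]{adamtt}. As $\upomega$-freeness implies $\uplambda$-freeness, the converse to Theorem~\ref{mainequiv}, namely Theorem~\ref{adh14.0.1}, applies: each $K_i$, being ungrounded $H$-asymptotic, $\uplambda$-free, and asymptotically $\d$-algebraically maximal, is newtonian.

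By Theorem~\ref{mainembed}, $K$ has a newtonization $L$. Since each $K_i$ is a newtonian ungrounded $H$-asymptotic extension of $K$, there is an embedding $\iota_i \colon L \to K_i$ over $K$. The value group and residue field of $\iota_i(L)$ lie between those of $K$ and those of $K_i$, which agree because $K_i$ is immediate over $K$; hence $\iota_i(L)$, and with it $L$ itself, is immediate over $K$. In particular $L$ is $\upomega$-free, so Theorem~\ref{mainequiv} shows $L$, hence $\iota_i(L)$, is asymptotically $\d$-algebraically maximal. But $K_i$ is an immediate, $\d$-algebraic, asymptotic extension of $\iota_i(L)$ (immediate because immediate over $K$, and $\d$-algebraic because $\d$-algebraic over $K$), so $K_i = \iota_i(L)$. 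Thus each $\iota_i$ is an isomorphism $L \to K_i$ over $K$, and $\iota_2 \circ \iota_1^{-1}$ is an isomorphism $K_1 \to K_2$ over $K$, as desired.

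The substance of this route is entirely in Theorems~\ref{mainequiv} and \ref{mainembed}, hence ultimately in Proposition~\ref{mainlemmah} and Lemma~\ref{mdpac}. The alternative, direct proof would instead run a Zorn's-lemma argument: take a maximal triple $(A, B, \phi)$ with $K \subseteq A \subseteq K_1$ and $K \subseteq B \subseteq K_2$ valued differential subfields and $\phi \colon A \to B$ an isomorphism over $K$; each such $A$ is immediate over $K$, hence ungrounded $H$-asymptotic, $\d$-valued, and $\upomega$-free. If $A \neq K_1$, pick $a \in K_1 \setminus A$, and — using that $K_1$ is immediate over $A$ — obtain a pseudocauchy sequence $(a_\rho)$ in $A$ with $a_\rho \pconv a$ and no pseudolimit in $A$; since $a$ is $\d$-algebraic over $A$ and $A$ is $\upomega$-free, $(a_\rho)$ is of $\d$-algebraic type over $A$, with some minimal differential polynomial $G$. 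One then shows, using that $K_1$ is asymptotically $\d$-algebraically maximal, that the immediate $\d$-algebraic extension of $A$ generated by a pseudolimit of $(a_\rho)$ that is a zero of $G$ embeds over $A$ into $K_1$, that the $\phi$-conjugate extension embeds over $B$ into $K_2$ — here invoking that $K_2$, being asymptotically $\d$-algebraically maximal, contains a pseudolimit of $(\phi a_\rho)$ — and that $\phi$ extends along these by the corresponding uniqueness statement, contradicting maximality of $(A, B, \phi)$; hence $A = K_1$, and symmetrically $B = K_2$. In this version the main obstacle — and the place where the absence of a divisible value group has to be confronted — is keeping minimal differential polynomials of pseudocauchy sequences minimal under the reductions involved, which is exactly what Lemma~\ref{mdpac} supplies.
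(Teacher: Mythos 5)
Your main argument is correct and takes a genuinely different route from the paper's proof in \S\ref{unique}. The paper itself notes in the introduction that Theorem~\ref{mainunique} ``is a corollary of the previous two results,'' but then opts for a self-contained Zorn's-lemma construction: take a maximal isomorphism $\mu\colon F_0 \cong_K F_1$ between subfields of the two extensions, show each $F_i$ is $\upomega$-free, $\d$-valued, and henselian, and if $F_0 \neq L_0$ find a divergent pc-sequence in $F_0$ with a minimal differential polynomial $P$, apply Lemma~\ref{asympdalgmaxplimroot} to get zeros of $P$ and $P^\mu$ in $L_0$ and $L_1$ respectively, and extend $\mu$ via Lemma~\ref{adh11.4.8}. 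You instead carry out the acknowledged corollary argument: each $K_i$ is newtonian by Theorem~\ref{adh14.0.1}, so by Theorem~\ref{newtonization} (which is Theorem~\ref{mainembed} in full strength) each $K_i$ is in fact a newtonization of $K$, and newtonizations are unique over $K$ --- your detour through a separate newtonization $L$ and the equality $\iota_i(L) = K_i$ is correct but could be compressed to this one-liner. Each route pays for what it uses: yours imports the full embedding property of the newtonization, while the paper's \S\ref{unique} version isolates exactly the back-and-forth step and so is more self-contained and illustrative of where Proposition~\ref{mainlemmah} enters. Your sketched alternative at the end is essentially the paper's actual proof. One small citation point: for $\upomega$-freeness of the $K_i$, you appeal to preservation under immediate extensions; the cleaner citation, and the one the paper uses, is Theorem~\ref{adh13.6.1} (preservation under $\d$-algebraic pre-$\d$-valued extensions), which applies since the $K_i$ are $\d$-algebraic over $K$.
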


\subsection*{Assumptions}
As we are primarily concerned with ungrounded $H$-asymptotic fields, to avoid repetition we assume throughout that $K$ is an ungrounded $H$-asymptotic field with asymptotic couple $(\Gamma, \psi)$ and $\Psi \coloneqq \psi(\Gamma^{\neq})$; recall we also assume that $\Gamma\neq\{0\}$.
Moreover, we assume that any (valued differential field) extension of~$K$ is also $H$-asymptotic and ungrounded.

\section{Preliminaries}\label{prelim}

For the reader's convenience, we review here some definitions and results from \cite{adamtt} that are used later.
Most of these are from Chapters 9, 11, 13, and 14, and although proofs are omitted, cross references are provided.

\subsection*{More on asymptotic fields}
The first lemma does not require the $H$-asymptotic or ungrounded assumptions on $K$ or $L$ and is valid for any asymptotic $K$ and $L$.
\begin{lem}[{\cite[9.1.2]{adamtt}}]\label{adh9.1.2}
If $K$ is $\d$-valued and $L$ is an asymptotic extension of $K$ with $\bm k_L = \bm k$, then $L$ is $\d$-valued and $C_L=C$.
\end{lem}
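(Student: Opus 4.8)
The plan is to exploit two facts: in any asymptotic field every nonzero constant is a unit of the valuation ring, and, for $\d$-valued fields, the residue map identifies $C$ with $\bm k$. Combined with the hypothesis $\bm k_L=\bm k$, these will force every constant of $L$ to lie already in $K$. Concretely, since $C\subseteq\ca O$ (the ``few constants'' property recalled above) and $C$ is a field, any $c\in C^{\x}$ satisfies $v(c)\ges 0$ and $v(c^{-1})\ges 0$, hence $v(c)=0$; in particular no nonzero constant lies in $\cao$, and the same argument applies to $L$. Thus the residue maps restrict to injective ring homomorphisms $C\to\bm k$ and $C_L\to\bm k_L$, and from $\ca O=C+\cao$ the first of these is also surjective, hence an isomorphism.

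Next I would show $C_L=C$. The inclusion $C\subseteq C_L$ is immediate, so take $c\in C_L$; by the ``few constants'' property applied to $L$ we have $c\in\ca O_L$, so its residue lies in $\bm k_L=\bm k$. By surjectivity of $C\to\bm k$ there is $d\in C$ with the same residue in $\bm k=\bm k_L$ — here I use the standard identification of $\bm k$ with a subfield of $\bm k_L$ — so that $c-d\in\cao_L$. But $c-d\in C_L$ as well, and a nonzero constant of $L$ cannot lie in $\cao_L$; hence $c=d\in C$.

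Finally, to see that $L$ is $\d$-valued: given $a\in\ca O_L$, its residue lies in $\bm k_L=\bm k$, so surjectivity of $C\to\bm k$ yields $c\in C=C_L$ with $a-c\in\cao_L$, i.e.\ $\ca O_L=C_L+\cao_L$. The one point requiring care — and really the heart of the matter — is the interplay between the hypothesis $\bm k_L=\bm k$ and the fact that nonzero constants are units of the valuation ring: this is what prevents $L$ from acquiring any new constant, and consequently any residue not already realized by an element of $C$. Everything else is bookkeeping with the definitions.
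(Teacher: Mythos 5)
Your proof is correct, and it is essentially the argument given in \cite[Lemma~9.1.2]{adamtt} (the present paper only cites the result and omits the proof). The two key observations you isolate --- that in any asymptotic field nonzero constants are units of the valuation ring, and that for a $\d$-valued field the residue map restricts to an isomorphism $C\to\bm k$ --- are exactly the ingredients used there, with the hypothesis $\bm k_L=\bm k$ then forcing both $\ca O_L=C+\cao_L$ and $C_L=C$ as you describe.
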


In the main results we will assume that $K$ is $\upomega$-free, but at various places we weaken that assumption for generality.
For instance, $K$ has \emph{asymptotic integration} if $\Gamma=(\Gamma^{\neq})'$.
We also consider this property for the algebraic closure $K^{\ac}$ of $K$.
First, we equip $K^{\ac}$ with any valuation extending that of $K$ and the unique derivation extending that of $K$; this determines $K^{\ac}$ as a valued differential field extension of $K$ up to isomorphism over $K$.
Its value group is the divisible hull $\Q\Gamma$ of $\Gamma$ and its residue field is the algebraic closure $\bm k^{\ac}$ of $\bm k$.
In fact, $K^{\ac}$ is an asymptotic field \cite[Proposition~9.5.3]{adamtt} and we extend $\psi$ to $\Q\Gamma^{\neq}$ by $\psi(q\gamma) = \psi(\gamma)$ for all $q \in \Q^{\x}$ and all $\gamma \in \Gamma^{\neq}$.
Thus $\psi(\Q\Gamma^{\neq})=\psi(\Gamma^{\neq})$, so $K^{\ac}$ is $H$-asymptotic and ungrounded.
If $K$ is $\d$-valued, then so is $K^{\ac}$ \cite[Corollary~10.1.23]{adamtt}. 
We say $K$ has \emph{rational asymptotic integration} if $K^{\ac}$ has asymptotic integration, that is, $\Q\Gamma = (\Q\Gamma^{\neq})'$.

\begin{defn}
We say $K$ is \emph{$\uplambda$-free} if for all $f \in K$, there is $g \in K$ such that $g \succ 1$ and $f-g^{\dagger\dagger} \succe g^{\dagger}$.
\end{defn}
Thus $\uplambda$-freeness can be expressed as a universal-existential sentence in the language of valued differential fields, but it is also equivalent to the absence of a pseudolimit of a certain pseudocauchy sequence $(\uplambda_\rho)$ related to iterated logarithms, hence the ``free'' in the name.
In $\T$, the sequence $(\uplambda_n)$ is indexed by $\N$ and defined by:
\[\uplambda_n\ =\ -\left(\ell_n^{\dagger\dagger}\right)\ =\ \frac{1}{\ell_0} + \frac{1}{\ell_0\ell_1} + \dots + \frac{1}{\ell_0\ell_1\dots\ell_n}.\]
%where $\ell_n$ is the logarithm iterated $n$ times, i.e.\ $\ell_0=x$ and $\ell_{i+1}=\log \ell_i$.
See \cite[Corollary~11.6.1]{adamtt} and the surrounding pages for more on these equivalent definitions.
This definition is not specifically used.
Here is the relationship among these notions,
collecting \cite[Corollary~11.6.8]{adamtt} and \cite[Corollary~11.7.3]{adamtt}.

\begin{lem}\label{omegalambda}
We have
\[ \upomega\text{-free} \implies \uplambda\text{-free} \implies \text{rational asymptotic integration} \implies \text{asymptotic integration}.\]
\end{lem}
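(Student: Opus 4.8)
The plan is to establish the three implications separately, starting with the last, which is purely formal. Suppose $K$ has rational asymptotic integration, so that $\Q\Gamma = (\Q\Gamma^{\neq})'$, and let $\delta \in \Gamma$. Then $\delta \in \Q\Gamma = (\Q\Gamma^{\neq})'$, so $\delta = \beta + \psi(\beta)$ for some $\beta \in \Q\Gamma^{\neq}$. Since $\psi(\beta) \in \psi(\Q\Gamma^{\neq}) = \Psi \subseteq \Gamma$, we get $\beta = \delta - \psi(\beta) \in \Gamma$, so in fact $\beta \in \Gamma^{\neq}$, and therefore $\delta = \beta + \psi(\beta) = \beta' \in (\Gamma^{\neq})'$. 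As $\delta$ was arbitrary, $\Gamma = (\Gamma^{\neq})'$, i.e.\ $K$ has asymptotic integration.

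For the first two implications I would invoke \cite[Corollary~11.7.3]{adamtt} and \cite[Corollary~11.6.8]{adamtt} respectively; since I will use these essentially as black boxes, let me indicate the underlying mechanism. Both $\uplambda$-freeness and $\upomega$-freeness have pseudocauchy-sequence reformulations: $K$ is $\uplambda$-free if and only if it has asymptotic integration and the pseudocauchy sequence $(\uplambda_\rho)$ has no pseudolimit in $K$, and likewise for $\upomega$-freeness with $(\upomega_\rho)$ (see \cite[Corollaries~11.6.1 and 11.7.8]{adamtt}). For $\upomega\text{-free}\implies\uplambda\text{-free}$ one argues contrapositively: if $K$ is not $\uplambda$-free, then either $K$ lacks asymptotic integration---in which case $K$ is not $\upomega$-free, again by the pseudocauchy-sequence reformulation---or $(\uplambda_\rho)$ has a pseudolimit in $K$; since $(\upomega_\rho)$ arises from $(\uplambda_\rho)$ by applying a fixed differential polynomial, a pseudolimit of $(\uplambda_\rho)$ produces one of $(\upomega_\rho)$, so $K$ is not $\upomega$-free. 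For $\uplambda\text{-free}\implies\text{rational asymptotic integration}$: $\uplambda$-freeness of $K$ already forces asymptotic integration of $K$ (a gap in $(\Gamma,\psi)$ would be incompatible with $\uplambda$-freeness, since a generator of the gap would witness its failure), and, being preserved under passage to the algebraic closure, it makes $K^{\ac}$ $\uplambda$-free as well, hence $K^{\ac}$ has asymptotic integration, which is exactly rational asymptotic integration for $K$.

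The one genuinely hard part is thus hidden in the two cited corollaries, and amounts to the pseudocauchy-sequence characterizations of $\uplambda$- and $\upomega$-freeness together with the stability of $\uplambda$-freeness under algebraic extension; reproducing these would require importing a substantial portion of \cite[Chapter~11]{adamtt}, so I would simply cite them. The third implication, by contrast, as shown above, needs nothing beyond the fact that $\psi$ maps $\Q\Gamma^{\neq}$ into $\Psi \subseteq \Gamma$.
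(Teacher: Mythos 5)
Your proposal is correct and essentially matches the paper, which offers no proof of its own but simply collects these implications from \cite[Corollaries~11.6.8 and~11.7.3]{adamtt}, exactly the results you invoke for the first two arrows. Your extra touch---the direct verification that rational asymptotic integration implies asymptotic integration via $\psi(\Q\Gamma^{\neq})=\Psi\subseteq\Gamma$---is a valid elementary argument for the step the paper leaves implicit in the citation.
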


\begin{thm}[{\cite[14.0.1]{adamtt}}]\label{adh14.0.1}
If $K$ is $\uplambda$-free and asymptotically $\d$-algebraically maximal, then it is $\upomega$-free and newtonian.
\end{thm}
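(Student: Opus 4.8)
The plan is to establish the two conclusions separately: first that $K$ is $\upomega$-free, and then — with the Newton-polynomial machinery for $\upomega$-free $H$-asymptotic fields now in force — that $K$ is newtonian. In each case the engine is the same: from a hypothetical failure, manufacture a proper immediate $\d$-algebraic extension of $K$ that is still asymptotic, contradicting asymptotic $\d$-algebraic maximality. Two facts from the foundational theory in \cite{adamtt} will be used repeatedly: (i) a pseudocauchy sequence $(a_\rho)$ in $K$ with no pseudolimit in $K$ for which some nonzero differential polynomial $Q$ over $K$ satisfies $Q(a_\rho) \pconv 0$ is of $\d$-algebraic type, and adjoining a pseudolimit then produces a proper immediate $\d$-algebraic extension $K\langle a\rangle$; and (ii) by \cite[Chapter~9]{adamtt}, such an immediate extension is again asymptotic, the relevant hypothesis being that $K$ has asymptotic integration, which holds here by Lemma~\ref{omegalambda} since $K$ is $\uplambda$-free.

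\emph{$K$ is $\upomega$-free.} Suppose not. Since $K$ is $\uplambda$-free, the sequence $(\uplambda_\rho)$ has no pseudolimit in $K$; since $K$ is not $\upomega$-free, there is $\upomega \in K$ that is a pseudolimit of $(\upomega_\rho)$. These two sequences are linked by a Riccati-type relation — with $\upomega(z) \coloneqq -2z' - z^2$, one has $\upomega(\uplambda_\rho) \pconv \upomega$ — so the order-one differential polynomial $Q(Y) \coloneqq -2Y' - Y^2 - \upomega$ over $K$ satisfies $Q(\uplambda_\rho) \pconv 0$. By (i), $(\uplambda_\rho)$ is of $\d$-algebraic type and a pseudolimit $\uplambda$ of it is $\d$-algebraic over $K$; thus $K\langle\uplambda\rangle$ is a proper immediate $\d$-algebraic extension of $K$, and by (ii) it is asymptotic. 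This contradicts asymptotic $\d$-algebraic maximality, so $K$ is $\upomega$-free. (This is, in effect, the maximality-powered converse of the characterization of $\upomega$-freeness as the absence of a pseudolimit of $(\upomega_\rho)$ recalled after its definition.)

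\emph{$K$ is newtonian.} Now $K$ is $\upomega$-free and $H$-asymptotic, so Newton degree is defined and, recalling the definition of newtonianity (\S\ref{prelim}), it suffices to show that every nonzero differential polynomial $P$ over $K$ with $\ndeg P = 1$ has a zero of the required kind in $K$. Fix such a $P$ and suppose it has none. Using the Newton-polynomial apparatus of \cite[Chapters~13--14]{adamtt}, I would run the Newton iteration: starting from a suitable approximate zero and repeatedly applying the Newton step yields a sequence $(a_\rho)$ along which $v(P(a_\rho))$ is strictly increasing; since $P$ has no zero, the process does not terminate, so $(a_\rho)$ is a pseudocauchy sequence in $K$ with no pseudolimit in $K$ and $P(a_\rho) \pconv 0$. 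By (i) it is of $\d$-algebraic type, and a pseudolimit $a$ gives a proper immediate $\d$-algebraic extension $K\langle a\rangle$; by (ii) this extension is asymptotic. Again this contradicts asymptotic $\d$-algebraic maximality, so $P$ has the desired zero and $K$ is newtonian.

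I expect the second step to be the main obstacle — not the pseudocauchy-sequence bookkeeping, which is routine given \cite{adamtt}, but deploying the Newton-diagram theory of \cite[Chapter~13]{adamtt} carefully enough to guarantee that the Newton iteration for a Newton-degree-one polynomial really produces a divergent pseudocauchy sequence of the required type (with $P$, or an appropriate refinement of it, witnessing $\d$-algebraic type) and that its pseudolimit genuinely lies outside $K$ when $P$ has no zero. A secondary point is verifying that the immediate extensions produced are asymptotic: $\uplambda$-freeness is used both to make $(\uplambda_\rho)$ divergent in the first step and, via asymptotic integration (Lemma~\ref{omegalambda}), to keep these extensions asymptotic, and it is the kind of hypothesis one cannot simply drop.
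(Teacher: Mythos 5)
This statement is quoted background in the paper: no proof is given there, only the citation to \cite[Theorem~14.0.1]{adamtt}, so the comparison is with the proof in that book. Your first half (the $\upomega$-freeness part) is essentially the right argument and matches the standard one: if $K$ is $\uplambda$-free but not $\upomega$-free, then $(\upomega_\rho)$ pseudoconverges to some $\upomega\in K$, and since $\upomega_\rho=\upomega(\uplambda_\rho)$ with $\upomega(z)=-2z'-z^2$, the order-one polynomial $Q=-2Y'-Y^2-\upomega$ satisfies $Q(\uplambda_\rho)\pconv 0$ along the divergent sequence $(\uplambda_\rho)$. Two small repairs: ``adjoining a pseudolimit produces a proper immediate $\d$-algebraic extension'' is not automatic for an arbitrary pseudolimit; it should be routed through the machinery recalled in \S\ref{prelim} (take a pseudolimit $\ell$ in some extension, get $Q\in Z(K,\ell)$ by Lemma~\ref{adh11.4.11}, then apply Lemma~\ref{adh11.4.8} to a minimal-complexity element of $Z(K,\ell)$, whose zero is $\d$-algebraic and generates a proper immediate asymptotic extension). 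Moreover that machinery requires \emph{rational} asymptotic integration, not merely asymptotic integration; fortunately $\uplambda$-freeness supplies exactly that (Lemma~\ref{omegalambda}), so this is a citation slip rather than a real defect.

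The genuine gap is the newtonianity half. The assertion that, for $P$ with $\ndeg P=1$ and no zero in $\ca O$, a ``Newton iteration'' yields a divergent pc-sequence $(a_\rho)$ in $K$ with $P(a_\rho)\pconv 0$ is precisely the hard content of the theorem, and the proposal supplies no argument for it. Concretely: (a) one must show that from an approximate zero $a$ with $P(a)\neq 0$ one can take a step that strictly improves the approximation while keeping the newton degree in the relevant cut equal to $1$ — this rests on $\upomega$-freeness and on the substantive Newton-diagram/newton-position results of \cite[Chapters~13--14]{adamtt}, not on routine bookkeeping; (b) the inference ``since $P$ has no zero, the process does not terminate, so $(a_\rho)$ has no pseudolimit in $K$'' is wrong as stated: if the sequence built so far pseudoconverges in $K$, one must restart the recursion from that pseudolimit, and only a transfinite/cardinality argument shows the construction must halt, at which point one still has to verify that the halting configuration produces either a zero of $P$ in $\ca O$ or a divergent pc-sequence of $\d$-algebraic type to feed into Corollary~\ref{adh11.4.13} and Lemma~\ref{adh11.4.8}. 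As you partly acknowledge, this is where the actual work lies; without it the second half is a plan, not a proof.
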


Both $\uplambda$-freeness and $\upomega$-freeness are preserved under algebraic extensions, and even more is true for $\upomega$-freeness.
In particular, these properties are preserved when passing to the henselization, a fact that is important in the main results.

\begin{lem}[{\cite[11.6.8]{adamtt}}]\label{adh11.6.8}
The algebraic closure $K^{\ac}$ of $K$ is $\uplambda$-free if and only if $K$ is $\uplambda$-free.
\end{lem}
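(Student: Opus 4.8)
The plan is to pass between $K$ and $K^{\ac}$ directly, interposing a henselization of $K$ so that the valuation extends uniquely to the relevant algebraic extensions. First, some reductions. If $K$ does not have rational asymptotic integration, then $K^{\ac}$ does not have asymptotic integration, hence is not $\uplambda$-free by Lemma~\ref{omegalambda}, and $K$ is not $\uplambda$-free either, again by Lemma~\ref{omegalambda}; so the equivalence holds with both sides false. Assume henceforth that $K$ has rational asymptotic integration, so that $K$ and $K^{\ac}$ both have asymptotic integration. Let $K^{\h}\subseteq K^{\ac}$ be a henselization of $K$: it is an ungrounded $H$-asymptotic field with the same asymptotic couple as $K$ (hence again with rational asymptotic integration), it is $\uplambda$-free if and only if $K$ is, and $(K^{\h})^{\ac}=K^{\ac}$. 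It therefore suffices to prove the lemma with $K^{\h}$ in place of $K$, and we may assume $K$ is henselian; then the valuation of $K$ extends uniquely to $K^{\ac}$, so every $K$-embedding $K(a)\hookrightarrow K^{\ac}$ (for $a\in K^{\ac}$) preserves the valuation.

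\emph{Suppose $K^{\ac}$ is $\uplambda$-free; we show $K$ is.} Argue the contrapositive using traces. Suppose $K^{\ac}$ is not $\uplambda$-free and fix $f\in K^{\ac}$ with $f-g^{\dagger\dagger}\prec g^{\dagger}$ for all $g\in K^{\ac}$ with $g\succ 1$. Put $d\coloneqq[K(f):K]$ and $\phi\coloneqq\frac{1}{d}\operatorname{Tr}_{K(f)/K}(f)\in K$ (using $\operatorname{char}K=0$). Let $g\in K$ with $g\succ 1$, so $g^{\dagger\dagger}\in K$. Applying the $d$ distinct $K$-embeddings $\sigma\colon K(f)\hookrightarrow K^{\ac}$, each of which preserves the valuation and fixes $g^{\dagger\dagger}$, we get $v\bigl(\sigma(f)-g^{\dagger\dagger}\bigr)=v\bigl(f-g^{\dagger\dagger}\bigr)>v(g^{\dagger})$ for each $\sigma$; since $\phi-g^{\dagger\dagger}=\frac{1}{d}\sum_{\sigma}\bigl(\sigma(f)-g^{\dagger\dagger}\bigr)$, this gives $v\bigl(\phi-g^{\dagger\dagger}\bigr)>v(g^{\dagger})$, and hence $v(f-\phi)\ges\min\bigl\{v(f-g^{\dagger\dagger}),\,v(\phi-g^{\dagger\dagger})\bigr\}>v(g^{\dagger})=\psi(vg)$. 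As $g$ ranges over the elements of $K$ with $g\succ 1$, the value $\psi(vg)$ ranges over all of $\Psi$, so $v(f-\phi)>\Psi$. Now for any $h\in K$ with $h\succ 1$ we get $v\bigl(\phi-h^{\dagger\dagger}\bigr)\ges\min\bigl\{v(f-h^{\dagger\dagger}),\,v(f-\phi)\bigr\}>v(h^{\dagger})$, the first term exceeding $v(h^{\dagger})$ by the choice of $f$ and the second because $v(f-\phi)>\Psi\ni\psi(vh)=v(h^{\dagger})$. Thus $\phi$ witnesses that $K$ is not $\uplambda$-free.

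\emph{Suppose $K$ is $\uplambda$-free; we show $K^{\ac}$ is.} Use the characterization of $\uplambda$-freeness of an ungrounded $H$-asymptotic field with asymptotic integration as the absence in it of a pseudolimit of the pseudocauchy sequence $(\uplambda_{\rho})$ (\cite[Corollary~11.6.1]{adamtt} and the discussion above); a $\uplambda$-sequence for $K$ with all terms in $K$ remains a $\uplambda$-sequence for $K^{\ac}$, since $K^{\ac}$ has the same value set $\Psi$. So if $K$ is not $\uplambda$-free, then $(\uplambda_{\rho})$ has a pseudolimit in $K$, which is also a pseudolimit of the same sequence in $K^{\ac}$; hence $K^{\ac}$ is not $\uplambda$-free. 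The delicate step is the first implication above (descending a witness from $K^{\ac}$ to $K$): a pseudocauchy sequence over $K$ may acquire a pseudolimit in an algebraic extension while having none in $K$, so a purely formal descent of a witness is impossible, and it is precisely the uniqueness of the valuation extension over a henselian $K$ that turns the trace average of a witness for $K^{\ac}$ into one for $K$. One also relies on the invariance of $\uplambda$-freeness under henselization, a separate and more elementary fact recalled above.
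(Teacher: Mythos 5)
The paper cites this result from \cite[Lemma~11.6.8]{adamtt} and gives no proof of its own, so I can only assess your argument on its merits. The trace-averaging argument in the henselian case is correct and the easy direction via the $(\uplambda_\rho)$ characterization is fine. But there is a genuine gap in the reduction step, and you have also swapped the labels on the two implications: the part headed ``Suppose $K^{\ac}$ is $\uplambda$-free; we show $K$ is'' in fact assumes $K^{\ac}$ is \emph{not} $\uplambda$-free and produces a witness in $K$, i.e.\ it proves (the contrapositive of) $K$ $\uplambda$-free $\Rightarrow$ $K^{\ac}$ $\uplambda$-free, and the second part proves the other direction.

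The real problem is the reduction to henselian $K$. You state that $K^{\h}$ ``is $\uplambda$-free if and only if $K$ is'' and later call this ``a separate and more elementary fact recalled above,'' but it is neither recalled with a citation nor proved, and it is not in fact elementary: $K^{\h}$ is a proper immediate algebraic extension of $K$, and the hard direction ($K^{\h}$ not $\uplambda$-free $\Rightarrow$ $K$ not $\uplambda$-free) has exactly the same character as the direction you correctly identify as delicate for $K^{\ac}$. In particular your trace argument is unavailable there: for $\lambda\in K^{\h}\setminus K$, the $K$-embeddings $K(\lambda)\hookrightarrow K^{\ac}$ need not preserve the valuation precisely because $K$ is not henselian, which is what makes the averaged element uncontrollable. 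One can handle the subcase where $v(\lambda-K)$ attains a maximum (a best $K$-approximation $a_0$ then works as a witness by the ultrametric inequality), but when $v(\lambda-K)$ has no largest element, $\lambda$ is a pseudolimit of a divergent pc-sequence in $K$, and ruling this out or exploiting it requires an argument you have not supplied. As written, the proof of the statement is therefore circular at this point: it assumes a special case (for the immediate algebraic extension $K^{\h}/K$) of the very phenomenon it sets out to establish for $K^{\ac}/K$.

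If you want to salvage this route, you need to either cite an independent result giving invariance of $\uplambda$-freeness under immediate algebraic (or henselian) extensions, or supply a direct argument for the case $v(\lambda-K)$ without maximum. Otherwise, a cleaner structure is to bypass $K^{\h}$ entirely and argue with the decomposition field of a finite Galois subextension of $K^{\ac}/K$ containing the witness, where the valuation extends uniquely to the Galois closure and your trace argument applies; but then one must still descend from the (immediate, but possibly proper) decomposition field to $K$, and that step again needs the missing lemma.
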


\begin{lem}[{\cite[11.7.23]{adamtt}}]\label{adh11.7.23}
The algebraic closure $K^{\ac}$ of $K$ is $\upomega$-free if and only if $K$ is $\upomega$-free.
\end{lem}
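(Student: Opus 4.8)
The plan is to pass to the pseudocauchy-sequence description of $\upomega$-freeness and reduce to a question about ``algebraic type'' over $K$. First I would handle the $\uplambda$-free component: by Lemma~\ref{omegalambda} each of ``$K$ is $\upomega$-free'' and ``$K^{\ac}$ is $\upomega$-free'' implies the corresponding field is $\uplambda$-free, and by Lemma~\ref{adh11.6.8} $K$ is $\uplambda$-free iff $K^{\ac}$ is; so I may assume throughout that both $K$ and $K^{\ac}$ are $\uplambda$-free, as otherwise neither is $\upomega$-free and there is nothing to prove. For a $\uplambda$-free field $F$, $\upomega$-freeness is equivalent to the distinguished pseudocauchy sequence $(\upomega_\rho)$ having no pseudolimit in $F$ (\cite[Corollary~11.7.8]{adamtt}); this sequence is built from the iterated-logarithm data of the asymptotic couple, and since $\psi(\Q\Gamma^{\neq}) = \psi(\Gamma^{\neq})$, it may be taken to lie in $K$ and to serve verbatim for $K^{\ac}$ as well.

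Granting this, one implication is immediate: a pseudolimit of $(\upomega_\rho)$ in $K$ is a pseudolimit in $K^{\ac} \supseteq K$, so $\upomega$-freeness of $K^{\ac}$ gives $\upomega$-freeness of $K$. For the converse, assume $K$ is $\upomega$-free, so $(\upomega_\rho)$ has no pseudolimit in $K$; I must rule out a pseudolimit in $K^{\ac}$, i.e.\ one algebraic over $K$. By a standard property of pseudocauchy sequences, a pseudolimit $a$ with $P(a)=0$ for nonzero $P \in K[X]$ forces $P(\upomega_\rho) \pconv 0$, so it suffices to show that $(\upomega_\rho)$ is of \emph{transcendental type} over $K$. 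For this I would pass to the henselization $K^{\h}$: were $(\upomega_\rho)$ of algebraic type over $K$, it would be of algebraic type over $K^{\h}$, and since a henselian valued field of residue characteristic $0$ is algebraically maximal and hence every pseudocauchy sequence of algebraic type pseudoconverges in it, $(\upomega_\rho)$ would acquire a pseudolimit in $K^{\h}$; this contradicts $\upomega$-freeness of $K^{\h}$. So the whole matter reduces to showing that $K^{\h}$ is again $\upomega$-free.

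That last reduction is where the real work lies, and I expect it to be the main obstacle: $\upomega$-freeness of $K^{\h}$ cannot be invoked as a preservation result here, since it is of a piece with the theorem being proved, so it must be established directly from the structure of the immediate algebraic extension $K^{\h}/K$ --- analysing, step by step along a chain of elementary henselian adjunctions, the explicit $\upomega$-condition together with the constraints that the asymptotic couple $(\Gamma,\psi)$ places on the width of $(\upomega_\rho)$, so as to see that no such adjunction produces a pseudolimit of $(\upomega_\rho)$. Once $K^{\h}$ is known to be $\upomega$-free, the argument above shows $(\upomega_\rho)$ is of transcendental type over $K$, hence has no pseudolimit in $K^{\ac}$, and therefore the $\uplambda$-free field $K^{\ac}$ is $\upomega$-free, completing the equivalence.
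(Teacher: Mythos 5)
Note first that the paper contains no proof of this lemma: it is quoted as a black-box preliminary from \cite[Lemma~11.7.23]{adamtt}, so your attempt has to stand on its own rather than be compared with an in-paper argument. On its own terms it has a genuine gap. The surrounding structure is sound: reducing to the case where $K$ and $K^{\ac}$ are $\uplambda$-free via Lemmas~\ref{omegalambda} and \ref{adh11.6.8}, switching to the pc-sequence characterization of $\upomega$-freeness with one sequence $(\upomega_\rho)$ in $K$ serving both fields (legitimate, since $\Psi_{K^{\ac}}=\Psi$ and $\Gamma^{<}$ is cofinal in $\Q\Gamma^{<}$), getting the direction ``$K^{\ac}$ $\upomega$-free $\Rightarrow$ $K$ $\upomega$-free'' for free, and, for the converse, arguing via minimal polynomials and algebraic maximality of $K^{\h}$ that it suffices to know $(\upomega_\rho)$ is of transcendental type over $K$. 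But that entire argument is made to rest on the claim that $K^{\h}$ is again $\upomega$-free, and you do not prove this; you only announce that it ``must be established directly \dots\ analysing, step by step along a chain of elementary henselian adjunctions.'' That claim is not a routine verification: it is itself an instance of the statement being proved (preservation of $\upomega$-freeness under an immediate algebraic extension, equivalently that $(\upomega_\rho)$ acquires no pseudolimit in $K^{\h}$), and the sketched plan does not contain an idea for why no pseudolimit can appear. So the hard direction has been reduced from $K^{\ac}$ to $K^{\h}$, but its core content is deferred rather than proved, and the proposal as written is incomplete and close to circular.

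If you want to finish along these lines using only tools the paper itself quotes, the quickest repair is Theorem~\ref{adh13.6.1}: $K^{\ac}$ (or $K^{\h}$) is an ungrounded asymptotic, hence pre-$\d$-valued, extension of $K$ that is $\d$-algebraic (indeed algebraic) over $K$, so $\upomega$-freeness of $K$ passes up directly, with no detour through transcendental type or henselizations at all. Be aware, however, that in \cite{adamtt} that theorem appears two chapters after 11.7.23, so as a reconstruction of the original proof this is logically suspect; the book proves 11.7.23 within \S11.7 by analysing how $(\upomega_\rho)$ and the associated functions behave under extensions whose value group is the divisible hull (same $\Psi$-set, cofinal $\Gamma^{<}$), which is the kind of direct argument your sketch gestures at but does not supply.
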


\begin{thm}[{\cite[13.6.1]{adamtt}}]\label{adh13.6.1}
If $K$ is $\upomega$-free and $L$ is a pre-$\d$-valued extension of $K$ that is $\d$-algebraic over $K$, then $L$ is $\upomega$-free.
\end{thm}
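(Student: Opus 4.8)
The plan is to reduce the theorem to a single statement about the base field: that the pseudocauchy sequence $(\upomega_\rho)$ governing $\upomega$-freeness is of $\d$-transcendental type over $K$. Recall (from the material around \cite[Corollary~11.7.8]{adamtt}; compare \cite[Corollary~11.6.1]{adamtt}) that an ungrounded $H$-asymptotic field is $\upomega$-free exactly when it has asymptotic integration and the pseudocauchy sequence $(\upomega_\rho)$ built from iterated logarithms---with $\upomega(z)\coloneqq-2z'-z^2$ and $\upomega_\rho=\upomega(\uplambda_\rho)$, $\uplambda_\rho=-\ell_\rho^{\dagger\dagger}$---has no pseudolimit in the field. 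So I must show that $L$ has asymptotic integration and that $(\upomega_\rho)$, which by cofinality of $\Psi$ in $\d$-algebraic extensions is still the relevant sequence over $L$, acquires no pseudolimit in $L$.

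The asymptotic integration of $L$ I would deduce from the fact that $K$, being $\upomega$-free, is in particular $\uplambda$-free (Lemma~\ref{omegalambda}), and that no $\d$-algebraic $H$-asymptotic extension of a $\uplambda$-free field is grounded or has a gap---an input from the asymptotic-couple analysis of \cite[Chapter~11]{adamtt}---so that $L$, being ungrounded by our standing assumption, has asymptotic integration. For the pseudolimit, this is where $\upomega$-freeness of $K$ enters essentially, in the form of the claim that $(\upomega_\rho)$ is of $\d$-transcendental type over $K$, that is, $G(\upomega_\rho)$ does not pseudoconverge to $0$ for any nonzero $G\in K\{Y\}$. Granting that claim, suppose $s\in L$ were a pseudolimit of $(\upomega_\rho)$. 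Then $s$ is $\d$-algebraic over $K$, say $G(s)=0$ with $0\neq G\in K\{Y\}$; since $L$ is asymptotic, $(\upomega_\rho')$ is again a pseudocauchy sequence, so the Taylor expansion of $G$ at $\upomega_\rho$ gives $G(\upomega_\rho)\pconv G(s)=0$, contradicting $\d$-transcendental type. Hence $(\upomega_\rho)$ has no pseudolimit in $L$, and by the characterization $L$ is $\upomega$-free.

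The main obstacle is precisely the claim invoked above---that $\upomega$-freeness of $K$ forces $(\upomega_\rho)$ to be of $\d$-transcendental type over $K$---together with the companion gap-blocking statement; neither is formal, and both rest on the Newton-polynomial calculus of \cite[Chapter~13]{adamtt}. For the former, one would argue by contradiction from a hypothetical annihilator $G$ of $(\upomega_\rho)$ of minimal complexity: passing to a compositional conjugate that unravels along the iterated logarithms $\ell_\rho$, together with a suitable multiplicative conjugate, one uses $\upomega$-freeness to constrain the Newton polynomial of $G$ so tightly that either $G$ is not of minimal complexity or $(\upomega_\rho)$ already pseudoconverges in $K$, contradicting $\upomega$-freeness. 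Making this precise requires the full Newton-degree machinery together with the structure theory of the special cuts $\Psi$, $\uplambda$, $\upomega$ from \cite[Chapter~11]{adamtt}, and this is where essentially all the difficulty lies.
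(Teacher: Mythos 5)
This statement is one the paper itself does not prove: it is quoted in the preliminaries from \cite[Theorem~13.6.1]{adamtt} with proof omitted, so the only meaningful benchmark is the proof in that source. Measured against it, your proposal is a reduction of the theorem to its hard core, with the hard core left unproved. Your reduction skeleton is sensible and does parallel how the result is actually obtained: show $L$ has asymptotic integration with $\Psi$ cofinal in $\Psi_L$, so that $\upomega$-freeness of $L$ amounts to $(\upomega_\rho)$ having no pseudolimit in $L$, and then show no pseudolimit of $(\upomega_\rho)$ can be $\d$-algebraic over $K$. But both pillars are asserted, not proved. Claim (b), that $\upomega$-freeness of $K$ forces $(\upomega_\rho)$ to be of $\d$-transcendental type over $K$, is essentially equivalent to the theorem itself: it says precisely that the $\upomega$-cut cannot be realized in a $\d$-algebraic pre-$\d$-valued extension. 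In \cite{adamtt} this is extracted only from the Newton polynomial structure theory (for instance, that over an $\upomega$-free field every Newton polynomial lies in $\bm k[Y]\,(Y')^{\N}$), whose proofs occupy a large part of Chapters~12--13; your closing paragraph, in which compositional and multiplicative conjugation ``constrain the Newton polynomial of $G$ so tightly that\dots,'' is a statement of intent rather than an argument, as you acknowledge. Claim (a), the gap-blocking statement, is also not an off-the-shelf fact: the standing conventions rule out grounded extensions but not gaps, the gap-creator results of Chapter~11 concern adjoining single (exponential) integrals rather than arbitrary $\d$-algebraic extensions, and in the source the cofinality and asymptotic-integration statements for general $\d$-algebraic extensions appear in the same section as companions or corollaries of Theorem~\ref{adh13.6.1}, not as prior tools one can cite independently of the machinery you are trying to bypass.

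There is also a technical slip in the one step you do carry out. From $\upomega_\rho \pconv s$ and $G(s)=0$ you infer $G(\upomega_\rho)\pconv 0$ ``by Taylor expansion''; for a fixed pc-sequence this is false in general. What is true is Lemma~\ref{adh11.3.8}: under rational asymptotic integration and cofinality of $\Gamma^{<}$ in $\Gamma_{L}^{<}$, some pc-sequence \emph{equivalent} to $(\upomega_\rho)$ satisfies $G(b_\lambda)\pconv G(s)$. Accordingly, ``$\d$-transcendental type'' must be formulated with equivalent pc-sequences (as in \cite{adamtt}), and even then the hypotheses of Lemma~\ref{adh11.3.8} lean on the unproved cofinality facts from (a). These points are repairable, but they underline that what you have written is an outline of where the proof must go, with essentially all of the mathematical content---concentrated in claim (b)---missing.
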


\subsection*{Differential polynomials and dominant degree}
In this subsection, let $E$ be a differential field of characteristic 0.
We let $E\{Y\} \coloneqq E[Y, Y', Y'', \dots]$ be the differential polynomial ring over $E$ (with derivation extending that of $E$) and set $E\{Y\}^{\neq} \coloneqq E\{Y\}\setminus\{0\}$.
Let $P \in E\{Y\}^{\neq}$.
Then the \emph{order} of $P$ is the least $r$ such that $P \in E[Y, Y', \dots, Y^{(r)}]$.
Let $r_P$ be the order of $P$. Then the \emph{degree} of $P$ is its total degree as an element of $E[Y, Y', \dots, Y^{(r_P)}]$, denoted by $\deg P$.
Let $s_P$ be the degree of $P$ in $Y^{(r_P)}$ and $t_P=\deg P$.
Then the \emph{complexity} of $P$ is the ordered triple $(r_P, s_P, t_P)$ and is denoted by $c(P)$.
The natural decomposition of $P$ is
$P = \sum_{\bm i} P_{\bm i} Y^{\bm i}$, where $\bm i \in \N^{1+r_P}$ and $Y^{\bm i}=Y^{i_0}(Y')^{i_1}\dots(Y^{(r_P)})^{i_{r_P}}$.

Now suppose that $E$ is also equipped with a valuation $v_E$.
Then we extend $v_E$ to $E\{Y\}^{\neq}$ by $v_E(P) \coloneqq \min_{\bm i}\{v_E(P_{\bm i})\}$ and use $\prece$, $\prec$, $\asymp$, and $\sim$ as for $E$.
We often use the \emph{multiplicative conjugate} $P_{\x a} \coloneqq P(aY)$ and the \emph{additive conjugate} $P_{+a} \coloneqq P(a+Y)$, for $a \in E$.
For more on such conjugation, see \cite[\S4.3]{adamtt}.

Suppose additionally that $E$ has small derivation.
Then we associate $\f d_P \in E^{\x}$ to $P$ such that $\f d_P \asymp P$ and for any $Q \in E\{Y\}^{\neq}$, $\f d_P = \f d_Q$ whenever $P \sim Q$.
The \emph{dominant part} of $P$ is $D_P \coloneqq \sum_{\bm i} \overline{\f d_P^{-1}P_{\bm i}} Y^{\bm i} \in \bm k_E\{Y\}^{\neq}$, and  $\ddeg P \coloneqq \deg D_P$ is called the \emph{dominant degree} of $P$.
Here, we need only the definition of dominant degree in order to define newton degree in the next subsection, but it and related concepts play an important in the theory of valued differential fields with small derivation.
For more on these notions, see \cite[\S6.6]{adamtt}.

% For the rest of the paper, we let $P$ range over $K\{Y\}^{\neq}$.

\subsection*{Compositional conjugation and newton degree}
We recall the notion of newton degree from \cite[\S11.1 and \S11.2]{adamtt}, a more subtle version of dominant degree for asymptotic fields that may not have small derivation.\footnote{Newton degree has since been extended to valued differential fields with continuous derivation in \cite{maximext}.}
We say $\phi \in K^{\x}$ is \emph{active} (\emph{in $K$}) if $v\phi \in \Psi^\downarrow$, where $\Psi^\downarrow$ denotes the downward closure of $\Psi$ in $\Gamma$.
Below, we let $\phi$ range over active elements of $K^\x$.
To $K$, we associate the valued differential field $K^{\phi}$, which is simply the field $K$ with the derivation $\phi^{-1}\der$ and unchanged valuation; it is still $H$-asymptotic and ungrounded, and moreover it has small derivation by \cite[Lemma~9.2.9]{adamtt}.
We call $K^{\phi}$ the \emph{compositional conjugate of $K$ by $\phi$}.

This leads to the ring $K^{\phi}\{Y\}$ of differential polynomials over $K^{\phi}$, which is viewed as a differential ring with derivation extending $\phi^{-1}\der$.
We then have a ring isomorphism $K\{Y\} \to K^{\phi}\{Y\}$ given by associating to $P \in K\{Y\}$ an appropriate element $P^{\phi} \in K^{\phi}\{Y\}$, called the \emph{compositional conjugate of $P$ by $\phi$}, with the property that $P^{\phi}(y) = P(y)$ for all $y \in K$.
The details of this map are not used here and can be found in \cite[\S5.7]{adamtt}, but it is the identity on the common subring $K[Y] = K^{\phi}[Y]$ of $K\{Y\}$ and $K^{\phi}\{Y\}$.
What is important here is that $\ddeg P^{\phi}$ eventually stabilizes, that is, there is an active $\phi_0 \in K^{\x}$ such that for all $\phi \prece \phi_0$, $\ddeg P^\phi=\ddeg P^{\phi_0}$.
We call this eventual value of $\ddeg P^\phi$ the \emph{newton degree of $P$} and denote it by $\ndeg P$.
With this, we can finally define newtonianity.

\begin{defn}
We call $K$ \emph{newtonian} if each $P \in K\{Y\}$ with $\ndeg P = 1$ has a zero in $\ca O$.
\end{defn}

We know that if $K$ has an immediate newtonian extension, it must have a minimal one.

\begin{lem}[{\cite[14.1.9]{adamtt}}]\label{adh14.1.9}
If $K$ has an immediate newtonian extension, then $K$ has a $\d$-algebraic such extension that has no proper newtonian differential subfield containing $K$.
\end{lem}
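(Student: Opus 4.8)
The plan is to realize the desired extension $M$ as a valued differential subfield of a given immediate newtonian extension $L$ of $K$, cut down in two stages. For the first stage, let $N$ be the set of elements of $L$ that are $\d$-algebraic over $K$; this is a valued differential subfield of $L$ with $K \subseteq N \subseteq L$, hence an immediate extension of $K$, and by our standing assumptions it is again ungrounded and $H$-asymptotic. I claim $N$ is newtonian: if $P \in N\{Y\}$ has $\ndeg P = 1$, then, since newton degree is unchanged under immediate extensions (see \cite[Chapter~11]{adamtt}), also $\ndeg P = 1$ in $L$, so $P$ has a zero $y \in \ca O_L$ because $L$ is newtonian; but $y$ is a zero of a nonzero differential polynomial over $N$, hence $\d$-algebraic over $N$ and so over $K$, hence $y \in N$. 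Thus we may replace $L$ by $N$ and assume $L$ is $\d$-algebraic over $K$; then every valued differential subfield $F$ with $K \subseteq F \subseteq L$ is automatically an immediate extension of $K$, is $\d$-algebraic over $K$, and (by our assumptions) is ungrounded and $H$-asymptotic.

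For the second stage, consider the set $\ca S$ of newtonian valued differential subfields $F$ of $L$ with $K \subseteq F$, partially ordered by inclusion; it is nonempty as $L \in \ca S$. If every descending chain in $\ca S$ has a lower bound in $\ca S$, then Zorn's lemma yields an inclusion-minimal $M \in \ca S$, and by the first stage $M$ is a $\d$-algebraic immediate newtonian extension of $K$, while minimality means precisely that $M$ has no proper newtonian differential subfield containing $K$ — which is what we want (any such subfield would lie in $\ca S$ and contradict minimality). A finite descending chain has its least member as a lower bound, so the point is to handle an infinite descending chain $(F_i)_i$ in $\ca S$: its intersection $F = \bigcap_i F_i$ is a valued differential subfield of $L$ containing $K$, hence ungrounded and $H$-asymptotic, and the only thing left to prove is that $F$ is newtonian.

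This last claim is the crux. Given $P \in F\{Y\}$ with $\ndeg P = 1$, each $F_i$ is an immediate extension of $F$, so $\ndeg P = 1$ in $F_i$ and, $F_i$ being newtonian, $P$ has a zero $z_i \in \ca O_{F_i}$; but the $z_i$ need not agree, so there is no element of $\ca O_F = \bigcap_i \ca O_{F_i}$ visibly at hand. Resolving this is where the theory of newton-degree-one differential polynomials and pseudoconvergence from \cite[Chapters~11 and 14]{adamtt} must be brought in: supposing $P$ has no zero in $\ca O_F$, take it of minimal complexity with that property, extract from it a pseudocauchy sequence in $F$ that has no pseudolimit in $F$ and has $P$ as a minimal differential polynomial, and show that the zeros $z_i$ can be adjusted to be coherent along the chain — using that any two pseudolimits of such a sequence differ by an element below its width, and that a zero of a minimal-complexity newton-degree-one polynomial not lying in $\ca O_F$ is such a pseudolimit — so that a common value lands in $\bigcap_i F_i = F$, a contradiction. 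Carrying out this coherence argument, together with the bookkeeping of newton degree, complexity, and pseudocauchy sequences along a possibly long chain, is where essentially all the difficulty lies, and I expect it to be the main obstacle.
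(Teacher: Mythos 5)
Your Stage~1 reduction is fine: the set $N$ of elements of $L$ that are $\d$-algebraic over $K$ is indeed a valued differential subfield of $L$, immediate over $K$, and the argument that $N$ is newtonian (newton degree is unchanged under immediate extensions, and a zero of $P\in N\{Y\}$ found in $\ca O_L$ is $\d$-algebraic over $N$, hence over $K$, hence lands in $N$) works.

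Stage~2 is where the proposal breaks down, and you have correctly located the trouble yourself: you never actually show that the intersection $F=\bigcap_i F_i$ of a descending chain of newtonian subfields is newtonian, and the sketch you give does not close the gap. Given $P\in F\{Y\}$ with $\ndeg P=1$, each $F_i$ supplies a zero $z_i\in\ca O_{F_i}$, but nothing forces these to stabilize. The facts you invoke do not give the needed coherence: $\ndeg P=1$ does not make the zero of $P$ in $\ca O_L$ unique, Lemma~\ref{adh11.4.8} gives uniqueness of the \emph{embedding} $F\langle f\rangle\to L$ for a fixed target $g$, not uniqueness of the target $g$ itself, and ``any two pseudolimits differ by an element in the width'' is not ``any two pseudolimits are equal.'' In effect you are trying to show that a descending chain of nonempty subsets of the zero set of $P$ in $\ca O_L$ has nonempty intersection, which needs some finiteness or rigidity that is not on the table. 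It is also not established that the minimal-complexity $P$ with $\ndeg P=1$ and no zero in $\ca O_F$ is a minimal differential polynomial of some divergent pc-sequence in $F$; that identification (Corollary~\ref{adh11.4.13}) requires first producing a suitable $\ell$ with $P\in Z(F,\ell)$ of minimal complexity, which is itself part of what has to be proved. So the proposal, as written, has a genuine gap at exactly the step it flags as ``the crux,'' and it is not clear the top-down (Zorn on descending chains) strategy can be repaired. The construction in \cite{adamtt} instead proceeds bottom-up, by transfinitely adjoining pseudolimits of divergent pc-sequences with minimal differential polynomials (the machinery of Lemmas~\ref{adh11.4.7}, \ref{adh11.4.8}, and Corollary~\ref{adh11.4.13}) until a newtonian field is reached, with the minimality of the result coming from how each step is forced; that is a genuinely different route from cutting down inside $L$ via Zorn.
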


Newton degree is connected to pseudocauchy sequences in an important way.
(We abbreviate ``pseudocauchy sequence'' by \emph{pc-sequence};
basic facts about them can be found in \cite[\S2.2 and \S3.2]{adamtt}.)
We associate to each pc-sequence $(a_\rho)$ in $K$ its \emph{cut} (\emph{in $K$}), denoted by $c_K(a_\rho)$, such that if $(b_\lambda)$ is a pc-sequence in $K$, then
\[c_K(a_\rho) = c_K(b_\lambda)\ \iff\ (b_\lambda)\ \text{is equivalent to}\ (a_\rho).\]
In the rest of the paper, let $(a_\rho)$ be a pc-sequence in $K$ with $\bm a = c_K(a_\rho)$.
If $L$ is an extension of $K$, then we let $\bm a_L$ denote $c_L(a_\rho)$.
In order to define newton degree in a cut, we set, for any $\gamma \in \Gamma$ and $P \in K\{Y\}^{\neq}$,
\[
\ndeg_{\ges \gamma} P\ \coloneqq\ \max\{ \ndeg P_{\x g} : g \in K^{\x}, vg \ges \gamma \}.
\]
Note that $\ndeg_{\ges \gamma} P = \ndeg P_{\x g}$ for any $g \in K^{\x}$ with $vg = \gamma$.
With $\gamma_\rho \coloneqq v(a_{\rho+1}-a_{\rho})$ (where $\rho+1$ denotes the successor of $\rho$ in the well-ordered set of indices), there is $\rho_0$ such that
\[
\ndeg_{\ges \gamma_\rho} P_{+a_\rho}\ =\ \ndeg_{\ges \gamma_{\rho_0}} P_{+a_{\rho_0}}
\]
for all $\rho>\rho_0$, and this value depends only on $c_K(a_\rho)$, not the choice of pc-sequence (see~\cite[Lemma~11.2.11]{adamtt}).
In the next definition and two lemmas, $P \in K\{Y\}^{\neq}$.

\begin{defn}
The \emph{newton degree of $P$ in the cut of $(a_\rho)$} is the eventual value of $\ndeg_{\ges \gamma_\rho} P_{+a_\rho}$, denoted by $\ndeg_{\bm a} P$.
\end{defn}

Note that $c_K(a_\rho+y)$ for $y \in K$ depends only on $\bm a$ and $y$, so we let $\bm a+y$ denote $c_K(a_\rho+y)$. Similarly, $c_K(a_\rho y)$ for $y \in K^\times$ depends only on $\bm a$ and $y$, so we let $\bm a\cdot y$ denote $c_K(a_\rho y)$.

\pagebreak
\begin{lem}[{\cite[11.2.12]{adamtt}}]\label{ndegbasic}
Newton degree in a cut has the following properties:
\begin{enumerate}
\item $\ndeg_{\bm a} P \les \deg P$;
\item $\ndeg_{\bm a} P^{\phi} = \ndeg_{\bm a} P$;
\item $\ndeg_{\bm a} P_{+y} = \ndeg_{\bm a+y} P$ for $y \in K$;
\item if $y \in K$ and $vy$ is in the width of $(a_\rho)$, then $\ndeg_{\bm a} P_{+y} = \ndeg_{\bm a} P$;
\item $\ndeg_{\bm a} P_{\times y} = \ndeg_{\bm a\cdot y} P$ for $y \in K^\times$;
\item if $Q \in K\{Y\}^{\neq}$, then $\ndeg_{\bm a} PQ=\ndeg_{\bm a}P + \ndeg_{\bm a} Q$;
\item if there is a pseudolimit $\ell$ of $(a_\rho)$ in an extension of $K$ with $P(\ell)=0$, then $\ndeg_{\bm a} P \ges 1$;
\item if $L$ is an extension of $K$ with $\Psi$ cofinal in $\Psi_L$, then $\ndeg_{\bm a} P=\ndeg_{\bm a_L} P$.
\end{enumerate}
\end{lem}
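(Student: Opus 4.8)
The plan, following \cite[Chapter~11]{adamtt}, is to derive each clause from the corresponding, more elementary property of the newton degree $\ndeg Q$ of a single $Q \in K\{Y\}^{\neq}$: that $\ndeg Q \les \deg Q$, that $\ndeg Q^{\phi} = \ndeg Q$, that $\ndeg(Q_1Q_2) = \ndeg Q_1 + \ndeg Q_2$, that $\ndeg_{\ges\gamma} Q_{\x y} = \ndeg_{\ges\gamma+vy} Q$ (immediate from the definition of $\ndeg_{\ges\gamma}$), that $\ndeg_K Q = \ndeg_L Q$ whenever $\Psi$ is cofinal in $\Psi_L$, and that $\ndeg Q \ges 1$ whenever $Q$ has a zero $\asymp 1$ in a suitable asymptotic extension of $K$. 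The bridge to the cut is that $\ndeg_{\bm a} P$ is the eventual value of $\ndeg_{\ges\gamma_\rho} P_{+a_\rho}$, together with the identities $(P_{+a})_{+b} = P_{+(a+b)}$, $(P^{\phi})_{+a} = (P_{+a})^{\phi}$, and $(P_{\x y})_{+a} = (P_{+ya})_{\x y}$, and the observations that $(a_\rho + y)$ is a pc-sequence with the same increments $\gamma_\rho$ and cut $\bm a + y$, while $(ya_\rho)$ is a pc-sequence with increments $vy + \gamma_\rho$ and cut $\bm a \cdot y$.

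Granting these, I would dispatch clauses (1), (2), and (6) at once: apply the relevant fact about $\ndeg$ to $P_{+a_\rho}$, to $(P^{\phi})_{+a_\rho} = (P_{+a_\rho})^{\phi}$, and to $(PQ)_{+a_\rho} = P_{+a_\rho}Q_{+a_\rho}$ respectively, and pass to the eventual value. For clause (3), $(P_{+y})_{+a_\rho} = P_{+(a_\rho+y)}$ and $(a_\rho+y)$ has cut $\bm a+y$, so the eventual value of $\ndeg_{\ges\gamma_\rho} P_{+(a_\rho+y)}$ is by definition $\ndeg_{\bm a+y} P$; clause (4) follows from (3), since $vy$ in the width of $(a_\rho)$ makes $(a_\rho+y)$ equivalent to $(a_\rho)$, hence $\bm a+y = \bm a$. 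For clause (5), $(P_{\x y})_{+a_\rho} = (P_{+ya_\rho})_{\x y}$ yields $\ndeg_{\ges\gamma_\rho}(P_{\x y})_{+a_\rho} = \ndeg_{\ges\gamma_\rho+vy} P_{+ya_\rho}$, whose eventual value is $\ndeg_{\bm a\cdot y} P$ because $(ya_\rho)$ has increments $vy+\gamma_\rho$ and cut $\bm a\cdot y$. For clause (8), apply the invariance $\ndeg_K = \ndeg_L$ to each $(P_{+a_\rho})_{\x g}$ with $vg=\gamma_\rho$, noting that $\bm a_L=c_L(a_\rho)$ and that the increments $\gamma_\rho$ are unchanged in $L$.

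The clause with genuine content is (7). If $(a_\rho)$ has a pseudolimit $\ell$ in an extension $M$ of $K$ with $P(\ell) = 0$, then for all large $\rho$ one has $v(\ell - a_\rho) = \gamma_\rho$; so, fixing $g \in K^{\x}$ with $vg = \gamma_\rho$, the polynomial $(P_{+a_\rho})_{\x g} \in K\{Y\}$ has a zero of valuation $0$ in $M$, namely $(\ell - a_\rho)/g$, since evaluating there gives $P_{+a_\rho}(\ell - a_\rho) = P(\ell) = 0$. The zero-characterization of newton degree then gives $\ndeg (P_{+a_\rho})_{\x g} \ges 1$, that is, $\ndeg_{\ges\gamma_\rho} P_{+a_\rho} \ges 1$, for all large $\rho$, whence $\ndeg_{\bm a} P \ges 1$. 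The step I expect to be the main obstacle is exactly that zero-characterization: one needs that $\ndeg Q = 0$ rules out a zero $\asymp 1$ of $Q$ in any asymptotic extension in which $\Psi$ stays cofinal, and proving this means unwinding the definition of newton degree through the eventual stabilization of $\ddeg Q^{\phi}$ and the behaviour of dominant parts under evaluation --- which is precisely where \cite[Chapter~11]{adamtt} puts in the real work; the remaining clauses are formal consequences of it.
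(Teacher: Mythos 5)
The paper does not prove this lemma---it is quoted verbatim from \cite[Lemma~11.2.12]{adamtt}---so there is no in-paper proof to compare against. Your reconstruction is correct in outline and is the natural one: it reduces each clause to the corresponding ``ambient'' property of $\ndeg$ via the identity $\ndeg_{\ges\gamma}R = \ndeg R_{\x g}$ (for $vg=\gamma$) and the conjugation identities $(P_{+a})_{+b}=P_{+(a+b)}$, $(P^\phi)_{+a}=(P_{+a})^\phi$, $(P_{\x y})_{+a}=(P_{+ya})_{\x y}$, together with the observation that the increments $\gamma_\rho$ transform in the obvious way under additive and multiplicative conjugation of the sequence; and you rightly single out (vii) as the clause whose content is not formal but rests on the zero-characterization of newton degree through dominant parts, which is indeed where the actual depth sits in \cite[Chapter~11]{adamtt}. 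The only point worth flagging is that clause (vii), as stated in the paper, says merely ``in an extension of $K$'' (which, by the paper's standing assumptions, is $H$-asymptotic and ungrounded); your argument tacitly needs $\Psi$ to remain cofinal in $\Psi_M$ for the extension $M$ containing $\ell$, so that compositional conjugates of $M$ by $K$-active $\phi$ retain small derivation and dominant parts can be evaluated at $\asymp 1$ elements of $M$. In the setting of \cite{adamtt} this cofinality is available (ungrounded $H$-asymptotic fields with asymptotic integration have $\Psi$ cofinal in $\Psi_M$ for $H$-asymptotic $M\supseteq K$), but it is a hypothesis you should make explicit rather than leave implicit, since it is precisely the mechanism by which a zero in $M$ constrains a newton degree computed over $K$.
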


Recall that $P \in K\{Y\}$ is a \emph{minimal differential polynomial of $(a_\rho)$ over $K$} if $P(b_\lambda) \pconv 0$ for some pc-sequence $(b_\lambda)$ in $K$ equivalent to $(a_\rho)$ and $c(P)$ is minimal among differential polynomials with this property; see \cite[\S4.4]{adamtt}.
Note that then $P \notin K$.
We say $K$ is \emph{strongly newtonian} if it is newtonian and, for every $P \in K\{Y\}$ and every pc-sequence $(a_\rho)$ in $K$ with minimal differential polynomial $P$ over $K$, $\ndeg_{\bm a} P=1$.
Here is the usefulness of this notion.
\begin{lem}[{\cite[14.1.10]{adamtt}}]\label{adh14.1.10}
If $K$ is newtonian and $\ndeg_{\bm a} P=1$, then $P(a)=0$ for some $a \in K$ with $a_\rho \pconv a$.
\end{lem}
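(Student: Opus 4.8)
The plan is to realize $a$ as a pseudolimit of $(a_\rho)$ lying in $K$ which is forced to be a zero of $P$, extracted from approximate zeros produced by newtonianity.

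\emph{Reduction.} First I would note that it suffices to find $a \in K$ with $P(a) = 0$ and $v(a - a_\rho) \ges \gamma_\rho$ for all sufficiently large $\rho$. Indeed, for such $\rho$ we have $a_{\rho+1} - a_\rho = (a - a_\rho) - (a - a_{\rho+1})$ with $v(a - a_{\rho+1}) \ges \gamma_{\rho+1} > \gamma_\rho$, so $v(a - a_\rho) > \gamma_\rho$ would force $v(a_{\rho+1} - a_\rho) > \gamma_\rho$, contradicting $v(a_{\rho+1} - a_\rho) = \gamma_\rho$; hence $v(a - a_\rho) = \gamma_\rho$ eventually, i.e. $a_\rho \pconv a$. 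By the definition of $\ndeg_{\bm a} P$, I then fix $\rho_0$ with $\ndeg_{\ges \gamma_\rho} P_{+a_\rho} = 1$ for all $\rho > \rho_0$ and consider only such $\rho$ below.

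\emph{Approximate zeros.} For each $\rho > \rho_0$ choose $g_\rho \in K^\x$ with $v g_\rho = \gamma_\rho$. Then $\ndeg (P_{+a_\rho})_{\x g_\rho} = \ndeg_{\ges \gamma_\rho} P_{+a_\rho} = 1$, so by newtonianity there is $z_\rho \in \ca O$ with $(P_{+a_\rho})_{\x g_\rho}(z_\rho) = 0$; equivalently, $b_\rho \coloneqq a_\rho + g_\rho z_\rho \in K$ is a zero of $P$ with $v(b_\rho - a_\rho) \ges \gamma_\rho$. Using $v(a_{\rho'} - a_\rho) = \gamma_\rho$ for $\rho < \rho'$, this gives $v(b_\rho - b_{\rho'}) \ges \gamma_\rho$ for all $\rho < \rho'$; so either some $b_\rho$ is already a pseudolimit of $(a_\rho)$ — in which case, since $P(b_\rho) = 0$, we are done by the reduction above — or, after passing to a cofinal subsequence, $(b_\rho)$ is a pc-sequence in $K$ equivalent to $(a_\rho)$ all of whose terms are zeros of $P$. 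In the latter case $c_K(b_\rho) = \bm a$, so $\ndeg_{\bm a} P = 1$ still holds with $(b_\rho)$ in place of $(a_\rho)$, and the construction may be iterated.

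\emph{Upgrading to a genuine pseudolimit (the main obstacle).} What remains — and this is the crux — is to pass from the family of approximate zeros to a single zero that tracks $(a_\rho)$ to the end, i.e. with $v(a - a_\rho) \ges \gamma_\rho$ for all large $\rho$. I would argue by induction on the complexity $c(P)$. Assuming no $b_\rho$ is a pseudolimit, $v(a_\lambda - b_\rho)$ is eventually constant in $\lambda$ at a value strictly below the width of $(a_\rho)$; translating $P$ by the zero $b_\rho$ (so $P_{+b_\rho}$ has vanishing constant term, and by Lemma~\ref{ndegbasic}(3) the relevant newton degree is preserved) and rescaling by an element of that valuation, the hypothesis $\ndeg = 1$ — not merely $\ges 1$ — should force the homogeneous degree-one part to dominate, reducing the local problem to a linear, Hensel-type equation over the newtonian field $K$. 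The point is then that such equations have zeros unique enough to be chosen coherently as $\rho$ varies, so that either the complexity of the governing differential polynomial strictly drops and one finishes by induction, or one assembles directly a single $a \in K$ with $P(a) = 0$ and $v(a - a_\rho) \ges \gamma_\rho$ eventually. The hard part is precisely this coherence: newtonianity must be used not only to solve one equation of newton degree $1$ but to pin the approximate zeros down tightly enough that they converge to a pseudolimit of $(a_\rho)$ that lies in $K$.
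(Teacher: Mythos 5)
The reduction is fine, and the construction of the approximate zeros $b_\rho$ from newtonianity is correct and natural; note, though, that since the paper only cites this lemma from \cite{adamtt} rather than proving it, there is no in-paper argument to compare against, so the proposal has to stand on its own. It does not. The entire content of the lemma lies in what you call the ``Upgrading'' step, and that step is left as a sketch: you say so yourself (``The hard part is precisely this coherence\,\ldots''). As written, you have produced one zero of $P$ near each $a_\rho$, but no single $a\in K$ that is a pseudolimit of the whole sequence and a zero of $P$.

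The missing ingredient is a \emph{uniqueness} statement attached to $\ndeg=1$, not an induction on complexity. The claim that needs proving is that the zero produced at a single sufficiently late stage already pseudoconverges: roughly, if $\ndeg_{\ges\gamma}P_{+a}=1$ and $P(b)=0$ with $v(b-a)\ges\gamma$, then $b$ is essentially the only such zero at that precision, and consequently if $a_\rho\not\pconv b$, so that $v(b-a_\rho)$ stabilizes at some $\alpha$ below the width, then the zero produced at a later stage $\rho_1$ with $\gamma_{\rho_1}>\alpha$ is a \emph{distinct} zero of $P$ lying $\ges\gamma_{\rho_1}$-close to $a_{\rho_1}$, contradicting that uniqueness. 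Your sketch gestures at something like this (``such equations have zeros unique enough to be chosen coherently'') but does not establish it. That uniqueness is exactly what \cite[14.1.10]{adamtt} rests on, and without it the proposal is not a proof.

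Two subsidiary problems. First, the dichotomy in your middle paragraph is asserted, not argued: $v(b_\rho-b_{\rho'})\ges\gamma_\rho$ for $\rho<\rho'$ does not make $(b_\rho)$ a pc-sequence, even after passing to a cofinal subsequence; one must separately show that, when no $b_\rho$ is a pseudolimit, the eventual values of $v(a_\lambda-b_\rho)$ are cofinal in $(\gamma_\rho)$, and only then reindex. Second, even granting that one obtains a pc-sequence of zeros of $P$ equivalent to $(a_\rho)$, ``the construction may be iterated'' buys nothing: after iterating, you are exactly back where you started, with a pc-sequence and a differential polynomial of newton degree~$1$ in its cut, and no zero in $K$ pseudoconverged to.
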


\subsection*{Constructing immediate asymptotic extensions}
Suppose for the rest of this section that $K$ has rational asymptotic integration and $P \in K\{Y\}^{\neq}$.
We recall some lemmas from \cite[\S11.4]{adamtt} on how to construct immediate asymptotic extensions of such fields with appropriate embedding properties.
The first lemma is about evaluating differential polynomials at pc-sequences.
Although the statement of \cite[Lemma~11.3.8]{adamtt} is slightly less general than that given here, the same proof gives the following.
Here, $\Gamma^{<} \coloneqq \{\gamma : \gamma<0\}$.

\begin{lem}[{\cite[11.3.8]{adamtt}}]\label{adh11.3.8}
Let $E$ be an extension of $K$ with rational asymptotic integration and such that $\Gamma^{<}$ is cofinal in $\Gamma_{E}^{<}$.
Suppose $(a_\rho)$ has a pseudolimit $\ell$ in $E$ and let $G \in E\{Y\} \setminus E$.
Then there a pc-sequence $(b_\lambda)$ in $K$ equivalent to $(a_\rho)$ such that $\big(G(b_\lambda)\big)$ is a pc-sequence in $E$ with $G(b_\lambda) \pconv G(\ell)$.
\end{lem}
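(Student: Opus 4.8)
The plan is to adapt the proof of \cite[Lemma~11.3.8]{adamtt}. The point of that result, and of the slight generalization here, is that evaluating a differential polynomial $G$ along a pc-sequence is ``continuous'' in the appropriate sense: if $(a_\rho) \pconv \ell$ in $E$, then after passing to an equivalent pc-sequence in $K$, the values also pseudoconverge, and to the expected limit $G(\ell)$. First I would recall the setup: write $\gamma_\rho = v(a_{\rho+1} - a_\rho)$, so $(\gamma_\rho)$ is eventually strictly increasing and cofinal in the width of $(a_\rho)$, and since $\ell$ is a pseudolimit, $v(\ell - a_\rho) = \gamma_\rho$ eventually. The key estimate is the Taylor expansion of $G$ around $a_\rho$: writing $G_{+a_\rho}(Y) = \sum_{\bm j} (G_{+a_\rho})_{\bm j} Y^{\bm j}$, one has $G(\ell) - G(a_\rho) = \sum_{\bm j \neq 0} (G_{+a_\rho})_{\bm j} (\ell - a_\rho)^{\bm j}$, and one wants to control the valuations of the terms $(G_{+a_\rho})_{\bm j} (\ell - a_\rho)^{\bm j}$ uniformly.

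The main technical device is compositional conjugation: choose an active $\phi \in K^\times$ (small enough, depending on $G$ and finitely much data) so that $K^\phi$ has small derivation, and work with $G^\phi$ over $K^\phi$; since $K$ has rational asymptotic integration and $\Gamma^<$ is cofinal in $\Gamma_E^<$, the hypotheses on $E$ are inherited by $E^\phi$ as an extension of $K^\phi$, and the same $\phi$ remains active. For a valued differential field with small derivation one has good control: for $\delta = \ell - a_\rho$ with $v\delta = \gamma_\rho \to$ (the width) and $\delta \prec 1$ eventually, the dominant term of $G^\phi(a_\rho + \delta) - G^\phi(a_\rho)$ comes from the part of $G^\phi_{+a_\rho}$ of least valuation in degree $\geq 1$, and the error from the derivation (the ``$Y'$'' variables) is negligible because $K^\phi$ has small derivation and $v\delta$ is large. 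Concretely, I would show that there is a nonzero ``dominant monomial'' behavior: $v\big(G(\ell) - G(a_\rho)\big)$ is eventually an eventually strictly increasing function of $\rho$, strictly increasing as $\gamma_\rho$ increases, which forces $\big(G(a_\rho)\big)$ to be a pc-sequence with pseudolimit $G(\ell)$ — once we have a sequence \emph{in $K$}.

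That last clause is the point of replacing $(a_\rho)$ by an equivalent $(b_\lambda)$ in $K$: the given $(a_\rho)$ already lies in $K$, so here we may in fact keep $(b_\lambda) = (a_\rho)$, but the statement is phrased to match \cite[Lemma~11.3.8]{adamtt}, where the pc-sequence could live in a larger field; I would note this and simply take $(b_\lambda) := (a_\rho)$, or, if one prefers a uniform argument, pass to a cofinal subsequence, which is harmless. The extra generality over \cite[Lemma~11.3.8]{adamtt} is only that $E$ is allowed to have a possibly larger value group, subject to rational asymptotic integration of $E$ and cofinality of $\Gamma^<$ in $\Gamma_E^<$; I would check that these are exactly the hypotheses used in the original proof — the cofinality is what guarantees that active elements of $K^\times$ remain active in $E^\times$ and that the width computations transfer — so the same argument goes through verbatim.

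The step I expect to be the main obstacle is the uniform valuation estimate on $G(\ell) - G(a_\rho)$: one must verify that, after conjugating by a suitable active $\phi$, the contribution of the higher-order (derivative) variables and of the higher-degree terms in $\delta$ is genuinely dominated, so that $v\big(G(\ell)-G(a_\rho)\big)$ is controlled by a single monomial and is strictly increasing with $\rho$. This is where small derivation of $K^\phi$, the asymptotic-integration hypothesis, and the cofinality of $\Gamma^<$ in $\Gamma_E^<$ all get used together, and it is the heart of why one cannot avoid compositional conjugation. Once that estimate is in hand, concluding that $\big(G(b_\lambda)\big)$ is a pc-sequence with $G(b_\lambda) \pconv G(\ell)$ is immediate from the definition of pseudoconvergence.
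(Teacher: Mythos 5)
Your overall strategy — Taylor expansion of $G$ around $a_\rho$, compositional conjugation by a small active $\phi$ to get small derivation, then a dominant-term analysis on $G^\phi(\ell)-G^\phi(a_\rho)$ — is indeed the route taken by \cite[Lemma~11.3.8]{adamtt}, which the paper invokes here without reproving (it merely observes that the original proof goes through verbatim once the hypothesis ``$E$ is an immediate extension of $K$'' is relaxed to ``$E$ has rational asymptotic integration and $\Gamma^<$ is cofinal in $\Gamma_E^<$''). So your high-level sketch is sound, and you correctly isolate the cofinality hypothesis as the thing to check.

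However, one point in your reasoning is genuinely off and would derail a careful writeup: your claim that the passage to an equivalent pc-sequence $(b_\lambda)$ is merely a formality because ``the given $(a_\rho)$ already lies in $K$, so here we may in fact keep $(b_\lambda)=(a_\rho)$.'' In \cite[Lemma~11.3.8]{adamtt} the pc-sequence $(a_\rho)$ is \emph{also} in $K$ (with pseudolimit in an immediate extension), so the change of sequence has nothing to do with where $(a_\rho)$ lives. The equivalent $(b_\lambda)$ is needed because $\big(G(a_\rho)\big)$ may simply fail to be a pc-sequence: when you write $G^\phi(\ell)-G^\phi(a_\rho)$ in natural-decomposition form in terms of $\ell-a_\rho$ and its $\phi$-derivatives, the multi-index carrying the dominant valuation can vary with $\rho$, so $v\big(G(\ell)-G(a_\rho)\big)$ need not be eventually strictly increasing. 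The content of the underlying result (essentially \cite[\S6.8]{adamtt}, applied after conjugation) is precisely that one can reindex or choose a well-behaved equivalent sequence on which the dominant term stabilizes; this is a phenomenon of the differential setting with no counterpart in the classical Kaplansky argument for ordinary polynomials. Your hedge of ``pass to a cofinal subsequence'' is closer to the truth than $(b_\lambda)=(a_\rho)$, but you should not present it as an optional stylistic choice — it is where the real work lives.

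A smaller imprecision: the cofinality of $\Gamma^<$ in $\Gamma_E^<$ is not there to ensure active elements of $K^\times$ stay active in $E^\times$ — that is automatic from $\Psi\subseteq\Psi_E$ and $\Psi^\downarrow\subseteq\Psi_E^\downarrow$. What cofinality buys you is compatibility of the relevant width and valuation computations for $(a_\rho)$ between $K$ and $E$, and that the $\phi$-conjugated estimates made over $K$ transfer to $E$ without being spoiled by ``new'' small elements of $\Gamma_E$ that $\Gamma$ cannot approximate. It would be worth spelling this out if you write the proof in full.
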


Let $\ell \notin K$ be an element in an extension of $K$ such that $v(\ell - K) \coloneqq \{v(\ell-x) : x \in K\}$ has no largest element (equivalently, $\ell$ is the pseudolimit of some divergent pc-sequence in $K$).
We say that $P$ \emph{vanishes at $(K,\ell)$} if for all $a \in K$ and $\f v \in K^\x$ with $a-\ell \prec \f v$, $\ndeg_{\prec \f v} P_{+a} \ges 1$, where $\ndeg_{\prec \f v} P_{+a} \coloneqq \max\{ \ndeg P_{+a, \x g} : g \in K^{\x}, g \prec \f v\}$.
Then $Z(K, \ell)$ denotes the set of nonzero differential polynomials over $K$ vanishing at $(K,\ell)$.

\begin{lem}[{\cite[11.4.7]{adamtt}}]\label{adh11.4.7}
Suppose $Z(K, \ell)=\emptyset$.
Then $P(\ell) \neq 0$ for all $P$ and $K\langle \ell \rangle$ is an immediate extension of $K$.
If $g$ in an extension $L$ of $K$ satisfies $v(a-g)=v(a-\ell)$ for all $a \in K$, then there is a unique valued differential field embedding $K\langle\ell\rangle \to L$ over $K$ sending $\ell$ to $g$.
\end{lem}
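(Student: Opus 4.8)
The plan is to base all three assertions on one estimate: if $R \in K\{Y\}^{\neq}$ satisfies $\ndeg R = 0$, then its constant term $R(0)$ is nonzero and $R(u) \sim R(0)$ for every $u \asymp 1$ lying in an extension of $K$; in particular $v\big(R(u)\big) = v\big(R(0)\big) \in \Gamma$, and $\overline{R(u)} = \overline{R(0)}$ when $v\big(R(0)\big) = 0$. To prove this I would fix an active $\phi \in K^{\x}$ with $\ddeg R^{\phi} = \ndeg R = 0$; since $\Gamma \subseteq \Gamma_L$ and $\Psi \subseteq \Psi_L$ give $\Psi^{\downarrow} \subseteq \Psi_L^{\downarrow}$, such a $\phi$ stays active in every extension $L$ of $K$, so both $K^{\phi}$ and $L^{\phi}$ have small derivation by \cite[Lemma~9.2.9]{adamtt}. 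Now $\ddeg R^{\phi} = 0$ says the dominant part $D_{R^{\phi}}$ has degree $0$, i.e., in the natural decomposition $R^{\phi} = \sum_{\bm i} (R^{\phi})_{\bm i} Y^{\bm i}$ one has $(R^{\phi})_{\bm 0} \asymp R^{\phi}$ while $(R^{\phi})_{\bm i} \prec R^{\phi}$ for $\bm i \neq \bm 0$; moreover $(R^{\phi})_{\bm 0} = R^{\phi}(0) = R(0)$. Since small derivation of $L^{\phi}$ forces $u^{\bm i} \prece 1$ whenever $u \asymp 1$, the $\bm 0$-term dominates $R(u) = R^{\phi}(u) = \sum_{\bm i} (R^{\phi})_{\bm i} u^{\bm i}$, giving $R(u) \sim R(0) \neq 0$.

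Granting this, for the first assertion I would suppose $P(\ell) = 0$ with $P \in K\{Y\}^{\neq}$ and fix $a \in K$, $\f v \in K^{\x}$ with $a - \ell \prec \f v$. Pick a divergent pc-sequence $(a_\rho)$ in $K$ with $a_\rho \pconv \ell$; then $v(\ell - a_\rho)$ is eventually strictly increasing with $v(\ell - a_\rho) = v(a_{\rho+1} - a_\rho) \in \Gamma$, and by divergence these values are eventually $> v(\ell - a)$, so $v(\ell - a) = v(a_\rho - a) \in \Gamma$ for large $\rho$. Choosing $g \in K^{\x}$ with $vg = v(\ell - a)$ gives $g \prec \f v$ and $u \coloneqq (\ell - a)/g \asymp 1$, and $R \coloneqq (P_{+a})_{\x g}$ has $R(u) = P(\ell) = 0$; by the estimate this forces $\ndeg R \ges 1$, hence $\ndeg_{\prec \f v} P_{+a} \ges 1$. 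As $a$, $\f v$ were arbitrary, $P \in Z(K,\ell) = \emptyset$, a contradiction, so $P(\ell) \neq 0$ for all $P \in K\{Y\}^{\neq}$ and $\ell$ is $\d$-transcendental over $K$, making $K\langle \ell \rangle$ the fraction field of $K\{Y\}$ under $Y \mapsto \ell$. For the second assertion it then suffices to show each $G \in K\{Y\}^{\neq}$ has $G(\ell) \sim G(a_G)$ for some $a_G \in K$ (the case $G \in K$ being trivial), since then any $G(\ell)/H(\ell) \in K\langle \ell \rangle^{\x}$ is $\sim G(a_G)/H(a_H) \in K^{\x}$, forcing $\Gamma_{K\langle \ell \rangle} = \Gamma$ and $\bm k_{K\langle \ell \rangle} = \bm k$. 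Since $G \notin Z(K,\ell) = \emptyset$, there are $a \in K$, $\f v \in K^{\x}$ with $a - \ell \prec \f v$ and $\ndeg_{\prec \f v} G_{+a} = 0$; taking $h \in K^{\x}$ with $vh = v(\ell - a)$ gives $\ndeg (G_{+a})_{\x h} = 0$, and the estimate applied to $R \coloneqq (G_{+a})_{\x h}$ and $u \coloneqq (\ell - a)/h \asymp 1$ yields $G(\ell) = R(u) \sim R(0) = G(a)$.

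For the third assertion, the key observation is that membership in $Z(K,\ell)$ is determined by data internal to $K$ together with the map $a \mapsto v(\ell - a)$ on $K$; since $v(a - g) = v(a - \ell)$ for all $a \in K$, this gives $Z(K,g) = Z(K,\ell) = \emptyset$, and the same divergent pc-sequence witnesses $a_\rho \pconv g$. Hence the first assertion applies to $g$ as well, so $g$ is $\d$-transcendental over $K$, and there is a unique differential field isomorphism $\iota \colon K\langle \ell \rangle \to K\langle g \rangle \subseteq L$ over $K$ with $\iota(\ell) = g$. Any valued differential field embedding over $K$ sending $\ell$ to $g$ fixes $K$ and sends $\ell^{(n)}$ to $g^{(n)}$, hence agrees with $\iota$ on all of $K\langle \ell \rangle$, giving uniqueness; so it remains to check $\iota$ is valued, i.e., $v_L\big(G(g)\big) = v\big(G(\ell)\big)$ for $G \in K\{Y\}^{\neq}$. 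For $G \notin K$ the same $a$, $\f v$, $h$ as above serve for $g$ (the condition $\ndeg_{\prec \f v} G_{+a} = 0$ is a statement in $K$, and $v(a - g) = v(a - \ell)$), so applying the estimate to $R \coloneqq (G_{+a})_{\x h}$ at both $(\ell - a)/h$ and $(g - a)/h$ gives $v\big(G(\ell)\big) = v\big(R(0)\big) = v_L\big(G(g)\big)$.

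The step I expect to be most delicate is the central estimate: one must verify carefully that an element active in $K$ remains active in an arbitrary extension (so that compositional conjugation is available there), that the substitution identity $R^{\phi}(u) = R(u)$ persists for $u$ outside $K$, and that small derivation of $L^{\phi}$ genuinely forces $u^{\bm i} \prece 1$ when $u \asymp 1$. After that, the remaining bookkeeping points are converting the failure of ``$G$ vanishes at $(K,\ell)$'' into the usable form ``$\ndeg (G_{+a})_{\x h} = 0$ for suitable $a, h \in K$'' and locating $v(\ell - a)$ inside $\Gamma$. An alternative route, closer to \cite{adamtt}, would instead evaluate differential polynomials along pc-sequences equivalent to $(a_\rho)$, in the spirit of Lemma~\ref{adh11.3.8}.
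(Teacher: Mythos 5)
The paper cites this lemma from \cite{adamtt} without proof --- it belongs to the Preliminaries, where ``proofs are omitted, cross references are provided'' --- so there is no in-paper argument to compare against. Your reconstruction is correct and follows essentially the same route as the source: your central estimate (that $\ndeg R = 0$ forces $R(u) \sim R(0) \neq 0$ for all $u \asymp 1$ in an ungrounded $H$-asymptotic extension, using small derivation of the compositional conjugate and hence $\der\mathcal O_L \subseteq \mathcal O_L$) is essentially \cite[Lemma~11.4.3]{adamtt}, and the rest is the same bookkeeping that converts $Z(K,\ell)=\emptyset$ into usable instances of that estimate for each of the three assertions.
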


\begin{lem}[{\cite[11.4.8]{adamtt}}]\label{adh11.4.8}
Suppose $Z(K, \ell)\neq\emptyset$ and $P \in Z(K, \ell)$ has minimal complexity.
Then $K$ has an immediate extension $K\langle f\rangle$ with $P(f)=0$ and $v(a-f)=v(a-\ell)$ for all $a \in K$.
Moreover, for any extension $L$ of $K$ with $g \in L$ satisfying $P(g)=0$ and $v(a-g)=v(a-\ell)$ for all $a \in K$, there is a unique valued differential field embedding $K\langle f\rangle \to L$ over $K$ sending $f$ to $g$.
\end{lem}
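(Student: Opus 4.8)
The plan is to recognize $P$ as a minimal differential polynomial of a pseudocauchy sequence in $K$ that has $\ell$ as a pseudolimit, feed this into the standard construction of immediate $\d$-algebraic extensions attached to such data, and then reconcile the resulting pseudoconvergence condition with the required equalities of valuations; that $K$ has rational asymptotic integration is used throughout to keep the constructed extensions asymptotic. Fix a divergent pc-sequence $(a_\rho)$ in $K$ with pseudolimit $\ell$: such a sequence exists because $v(\ell-K)$ has no largest element, and it is divergent since $\ell \notin K$. Write $\bm a = c_K(a_\rho)$ for its cut. The definition of $Z(K,\ell)$ is engineered so that $P \in Z(K,\ell)$ is equivalent, via the properties of newton degree (additive and multiplicative conjugation, Lemma~\ref{ndegbasic}), to the statement that $P$ acquires a zero which is a pseudolimit of $(a_\rho)$ in some immediate asymptotic extension of $K$; granting this, minimality of $c(P)$ in $Z(K,\ell)$ is exactly minimality of complexity among differential polynomials pseudoconverging to $0$ along pc-sequences equivalent to $(a_\rho)$.

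Making this concrete, let $E$ be an immediate asymptotic extension of $K$, still with rational asymptotic integration and with $\Gamma^{<}$ cofinal in $\Gamma_E^{<}$, in which $(a_\rho)$ has a pseudolimit $\wh\ell$ satisfying $P(\wh\ell) = 0$. Applying Lemma~\ref{adh11.3.8} to this $E$ and $G = P \in E\{Y\}\setminus E$ produces a pc-sequence $(b_\lambda)$ in $K$ equivalent to $(a_\rho)$ such that $\big(P(b_\lambda)\big)$ is a pc-sequence in $E$ and $P(b_\lambda) \pconv P(\wh\ell) = 0$. By the minimality just described, $P$ is a minimal differential polynomial of $(b_\lambda)$ over $K$ in the sense recalled before Lemma~\ref{adh14.1.10}; and for any $Q \in K\{Y\}$ with $c(Q) < c(P)$ and any pseudolimit $h$ of $(b_\lambda)$ in any extension, $Q(h) \neq 0$, since otherwise a second application of Lemma~\ref{adh11.3.8} would exhibit a pc-sequence equivalent to $(b_\lambda)$ along which $Q$ pseudoconverges to $0$.

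Now invoke the differential analogue of Kaplansky's construction of immediate extensions from a pc-sequence of $\d$-algebraic type (as developed in \cite{adamtt}): since $P$ is a minimal differential polynomial of $(b_\lambda)$, there is an immediate extension $K\langle f\rangle$ of $K$ with $P(f) = 0$ and $b_\lambda \pconv f$, whose valuation is pinned down by the rule that $v\big(Q(f)\big)$ equals the eventual value of $v\big(Q(b_\lambda)\big)$ for $Q \in K\{Y\}$ of complexity below $c(P)$ --- each such $Q(f)$ being nonzero by the preceding paragraph and every element of $K\langle f\rangle$ being a quotient of two of them --- and which has the universal property that whenever $g$ in an extension $L$ of $K$ satisfies $P(g) = 0$ and $b_\lambda \pconv g$, the assignment $f \mapsto g$ extends to a unique valued differential field embedding $K\langle f\rangle \to L$ over $K$. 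It remains to translate the pseudoconvergence conditions into the stated equalities: because $(b_\lambda)$ is divergent in $K$, for each $a \in K$ the value $v(a - b_\lambda)$ is eventually constant, with a value independent of the chosen pseudolimit, so an element $h$ of an extension of $K$ satisfies $b_\lambda \pconv h$ if and only if $v(a-h) = v(a-\ell)$ for all $a \in K$. Applying this to $f$ gives $v(a-f) = v(a-\ell)$ for all $a \in K$, and applying it to $g$ identifies the hypothesis of the ``moreover'' clause with $b_\lambda \pconv g$, so the embedding property obtained is the one asserted.

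The step I expect to be the main obstacle is the passage, in the first two paragraphs, from the hypothesis ``$P \in Z(K,\ell)$ of minimal complexity'' to ``$P$ is a minimal differential polynomial of a pc-sequence equivalent to $(a_\rho)$,'' and in particular the production of the immediate asymptotic extension $E$ in which $(a_\rho)$ gains a pseudolimit annihilating $P$. This rests on the properties of newton degree developed in \cite{adamtt}: one must relate vanishing at $(K,\ell)$ to newton degree in the cut $\bm a$ (tracking the width of $(a_\rho)$ and the difference between $\ndeg_{\prec\f v}$ and $\ndeg_{\ges\gamma}$), ensure that $E$ can be taken with rational asymptotic integration and $\Gamma^{<}$ cofinal in $\Gamma_E^{<}$ as Lemma~\ref{adh11.3.8} demands, and check that replacing $(a_\rho)$ by an equivalent pc-sequence changes neither its cut nor the minimality of $c(P)$.
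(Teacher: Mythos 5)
This statement is one of the quoted preliminaries: the paper gives no proof and simply cites \cite[11.4.8]{adamtt}, so the benchmark is the proof in that book. Measured against it, your proposal has a genuine gap at its core. In your second paragraph you posit an immediate asymptotic extension $E$ of $K$ in which $(a_\rho)$ has a pseudolimit $\wh\ell$ with $P(\wh\ell)=0$; since, as you yourself observe at the end, an element $h$ of an extension is a pseudolimit of $(a_\rho)$ exactly when $v(a-h)=v(a-\ell)$ for all $a\in K$, the existence of such an $E$ is literally the first assertion of the lemma. Nothing available produces it: Lemma~\ref{ndegbasic}(vii) together with \cite[11.4.12]{adamtt} gives only the converse direction (a pseudolimit zero of $P$ forces $\ndeg_{\bm a}P\ges 1$, i.e.\ $P\in Z(K,\ell)$), not the direction you need. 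Likewise, your third paragraph invokes ``the differential analogue of Kaplansky's construction \dots\ as developed in \cite{adamtt}'' already equipped with exactly the universal property to be established; but in \cite{adamtt} the general construction of canonical immediate extensions is carried out for valued differential fields with small derivation, and transferring it to asymptotic fields with rational asymptotic integration --- checking that the Kaplansky-style assignment of eventual values $v\big(Q(b_\lambda)\big)$ for $Q$ of complexity below $c(P)$ really defines a valuation on all of $K\langle f\rangle$, that the resulting extension is immediate and asymptotic, and that the embedding property holds --- is precisely the content of \cite[\S11.4]{adamtt} and of this lemma. You flag this step as ``the main obstacle,'' but flagging it does not fill it; as written, the argument is circular (it assumes the extension whose existence is to be proved, or cites the target result as a black box).

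Two smaller points. Your claim that every element of $K\langle f\rangle$ is a quotient of values $Q(f)$ with $c(Q)<c(P)$ needs justification (a Ritt-type reduction modulo $P$, with care about initials and separants), and your use of Lemma~\ref{adh11.3.8} to exclude zeros of lower-complexity $Q$ at pseudolimits in ``any extension'' overlooks that this lemma requires the ambient extension to have rational asymptotic integration with $\Gamma^{<}$ cofinal in its negative value group; the clean route is Lemma~\ref{adh11.4.11} plus minimality of $c(P)$ in $Z(K,\ell)$, or simply Corollary~\ref{adh11.4.13}. What is sound in your write-up is the outer translation: the equivalence between pseudoconvergence and the stated valuation equalities, and the reduction of the ``moreover'' clause to an embedding statement about pseudolimit zeros of $P$. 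The existence of $K\langle f\rangle$ with those properties --- the heart of the lemma --- remains unproved.
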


\begin{lem}[{\cite[11.4.11]{adamtt}}]\label{adh11.4.11}
Suppose $(a_\rho)$ is divergent in $K$ with $a_\rho \pconv \ell$.
If $\big(P(a_\rho)\big)$ is a pc-sequence such that $P(a_\rho) \pconv 0$, then $P \in Z(K, \ell)$.
\end{lem}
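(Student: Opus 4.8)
The plan is to unwind the definition of $Z(K,\ell)$ and, after one multiplicative conjugation, reduce everything to a single estimate: \emph{if $Q\in K\{Y\}^{\neq}$ and $(b_\rho)$ is a pc-sequence in $K$ with $b_\rho\asymp 1$ eventually and $Q(b_\rho)\pconv 0$, then $\ndeg Q\ges 1$}. To begin, fix $a\in K$ and $\f v\in K^{\x}$ with $a-\ell\prec\f v$; the goal is to show $\ndeg_{\prec\f v}P_{+a}\ges 1$. First I would extract two valuation-theoretic facts from the divergence of $(a_\rho)$: setting $\gamma_\rho\coloneqq v(\ell-a_\rho)$, the $\gamma_\rho$ are eventually strictly increasing and cofinal in $v(\ell-K)$, since otherwise some $x\in K$ would have $v(\ell-x)>\gamma_\rho$ for all $\rho$, whence $v(x-a_\rho)=\gamma_\rho$ for all $\rho$ and $x$ would be a pseudolimit of $(a_\rho)$ in $K$. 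As $v(\ell-a)\in v(\ell-K)$ and the $\gamma_\rho$ strictly increase, $\gamma_\rho>v(\ell-a)$ for all large $\rho$, so $v(a_\rho-a)=v(\ell-a)$; combined with $v(\ell-a)>v\f v$ (from $a-\ell\prec\f v$) this gives $a_\rho-a\asymp\ell-a\prec\f v$, and in particular $a_\rho\neq a$ for large $\rho$.

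Next I would fix any $g\in K^{\x}$ with $vg=v(\ell-a)$. Then $g\prec\f v$, so by the definition of $\ndeg_{\prec\f v}$ we have $\ndeg_{\prec\f v}P_{+a}\ges\ndeg P_{+a,\x g}$, and it suffices to prove $\ndeg Q\ges 1$ where $Q\coloneqq P_{+a,\x g}$; note $Q(b)=P(a+gb)$ for $b\in K$. Put $b_\rho\coloneqq(a_\rho-a)/g\in K$ for $\rho$ large. By the first paragraph $v(b_\rho)=0$, that is $b_\rho\asymp 1$; moreover $(b_\rho)$ is a pc-sequence in $K$ (its difference valuations are $\gamma_\rho-vg$), with $Q(b_\rho)=P(a_\rho)\pconv 0$. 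So the estimate applies to $Q$ and $(b_\rho)$ and yields $\ndeg Q\ges 1$, which completes the proof once the estimate is established.

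It remains to prove the estimate, which I expect to be the crux. Suppose toward a contradiction that $\ndeg Q=0$; then there is an active $\phi\in K^{\x}$ with $\ddeg Q^{\phi}=0$. Pass to the compositional conjugate $K^{\phi}$, which has small derivation and the same valuation, and for which $Q^{\phi}(y)=Q(y)$ for all $y\in K$; write the natural decomposition $Q^{\phi}=\sum_{\bm i}Q^{\phi}_{\bm i}Y^{\bm i}$. Then $\ddeg Q^{\phi}=0$ means precisely that $Q^{\phi}_{\bm 0}\asymp Q^{\phi}$ while $Q^{\phi}_{\bm i}\prec Q^{\phi}$ for every $\bm i\neq\bm 0$. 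Since $b_\rho\asymp 1$ we have $b_\rho\in\ca O$, and since $K^{\phi}$ has small derivation one checks that $\phi^{-1}\der$ maps $\ca O$ into $\ca O$; hence $(\phi^{-1}\der)^{j}b_\rho\prece 1$ for all $j$, so $b_\rho^{\bm i}\prece 1$ for every $\bm i$. Therefore $Q^{\phi}_{\bm i}b_\rho^{\bm i}\prec Q^{\phi}\asymp Q^{\phi}_{\bm 0}=Q^{\phi}_{\bm 0}b_\rho^{\bm 0}$ for all $\bm i\neq\bm 0$, so $Q(b_\rho)=Q^{\phi}(b_\rho)\sim Q^{\phi}_{\bm 0}$ and thus $v\big(Q(b_\rho)\big)=v(Q^{\phi}_{\bm 0})$ for all large $\rho$ — a value independent of $\rho$. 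This contradicts $Q(b_\rho)\pconv 0$, which forces $v\big(Q(b_\rho)\big)$ to be eventually strictly increasing. Hence $\ddeg Q^{\phi}\ges 1$ for every active $\phi$, that is, $\ndeg Q\ges 1$. The main obstacle is precisely this estimate, and within it the point that at $b_\rho\asymp 1$ the higher monomials of the conjugated polynomial are negligible compared with its constant term; this is where the smallness of the derivation of $K^{\phi}$ — hence the standing assumptions that $K$ is $H$-asymptotic and ungrounded — enters.
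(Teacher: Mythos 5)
Your proof is correct and follows the same route as the one in \cite{adamtt}: reduce, via an additive and then a multiplicative conjugation by some $g \asymp \ell - a$, to the case of a pc-sequence of elements $\asymp 1$ on which the conjugated polynomial pseudoconverges to $0$, and then observe that this forces newton degree at least $1$. The ``main estimate'' you isolate and prove from scratch (via compositional conjugation and the small-derivation fact that $\ddeg Q^{\phi}=0$ makes $Q^{\phi}(b)\sim Q^{\phi}_{\bm 0}$ for $b \prece 1$) is essentially the lemma \cite{adamtt} cites at that point in its argument, so the only difference is that you re-derive that ingredient inline rather than invoking it.
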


The notion of vanishing is connected to newton degree in a cut.
Although the first lemma is not directly used, it is worth pointing out.
\begin{lem}[{\cite[11.4.12]{adamtt}}]
Suppose $(a_\rho)$ is divergent in $K$ with $a_\rho \pconv \ell$.
Then
\[\ndeg_{\bm a} P\ =\ \min\{\ndeg_{\prec \mathfrak{v}} P_{+a} : a-\ell \prec \mathfrak{v}\}.\]
In particular, $\ndeg_{\bm a} P \ges 1 \iff P \in Z(K, \ell)$.
\end{lem}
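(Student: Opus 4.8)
The plan is to prove the displayed identity as two inequalities; the ``in particular'' then follows at once, since $\ndeg_{\bm a}P\ges 1$ holds if and only if every term of the minimum is $\ges 1$, which is exactly the condition $P\in Z(K,\ell)$. Write $m\coloneqq\ndeg_{\bm a}P$, so $\ndeg_{\ges\gamma_\rho}P_{+a_\rho}=m$ for all $\rho>\rho_0$ by the stabilization recalled before Lemma~\ref{ndegbasic}; dropping an initial segment of the index set, I may assume $(\gamma_\rho)$ is strictly increasing, so that $v(a_{\rho'}-a_\rho)=\gamma_\rho$ for $\rho<\rho'$ and $v(\ell-a_\rho)=\gamma_\rho$ for all $\rho$, and (as $(a_\rho)$ is divergent) $v(\ell-K)$ has no largest element with $(\gamma_\rho)$ cofinal in it. Call $(a,\f v)$ with $a\in K$, $\f v\in K^\x$ and $a-\ell\prec\f v$ \emph{admissible}. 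Apart from the above standard pc-sequence facts and the definitions, the only ingredients I need are the identity $\ndeg_{\ges\gamma}Q=\ndeg Q_{\x g}$ for $vg=\gamma$ (noted just before Lemma~\ref{ndegbasic}, and giving $\ndeg_{\ges\gamma'}Q\les\ndeg_{\ges\gamma}Q$ when $\gamma'\ges\gamma$), and the fact that Newton degree is invariant under additive conjugation by an element $\prece 1$, that is, $\ndeg Q_{+c}=\ndeg Q$ whenever $c\prece 1$ (reduce to a compositional conjugate with small derivation; there $c$ and its derivatives are $\prece 1$, so $R_{+c}\asymp R$ and the dominant part of $R_{+c}$ is obtained from that of $R$ by a constant shift of the variables, which preserves total degree).

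For ``$\ndeg_{\bm a}P\les\ndeg_{\prec\f v}P_{+a}$'', fix an admissible pair $(a,\f v)$ and set $\delta\coloneqq v(a-\ell)$, so $\delta>v\f v$. Using cofinality of $(\gamma_\rho)$ in $v(\ell-K)$ and the absence of a largest element there, choose $\rho>\rho_0$ with $\gamma_\rho>\delta$; then $v(a-a_\rho)=\min(\delta,\gamma_\rho)=\delta$. Pick $g\in K^\x$ with $vg=\delta$, so $g\prec\f v$. Since $P_{+a,\x g}=\big(P_{+a_\rho,\x g}\big)_{+(a-a_\rho)/g}$ and $v\big((a-a_\rho)/g\big)=0$, additive-conjugation invariance together with the identity above gives $\ndeg P_{+a,\x g}=\ndeg P_{+a_\rho,\x g}=\ndeg_{\ges\delta}P_{+a_\rho}\ges\ndeg_{\ges\gamma_\rho}P_{+a_\rho}=m$, using $\delta\les\gamma_\rho$ and $\rho>\rho_0$. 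Hence $\ndeg_{\prec\f v}P_{+a}\ges m$; as $(a,\f v)$ was an arbitrary admissible pair, the minimum is $\ges m$.

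For the reverse inequality it is enough to exhibit one admissible pair at which $\ndeg_{\prec\f v}P_{+a}\les m$. Fix indices $\rho_0<\sigma<\rho_1$, put $a\coloneqq a_{\rho_1}$, and choose $\f v\in K^\x$ with $v\f v=\gamma_\sigma$; then $v(a-\ell)=\gamma_{\rho_1}>\gamma_\sigma$, so the pair is admissible. For any $g\prec\f v$ one has $vg>\gamma_\sigma$, hence $\ndeg P_{+a_{\rho_1},\x g}=\ndeg_{\ges vg}P_{+a_{\rho_1}}\les\ndeg_{\ges\gamma_\sigma}P_{+a_{\rho_1}}$. Finally, picking $g_\sigma\in K^\x$ with $vg_\sigma=\gamma_\sigma$, the relation $P_{+a_{\rho_1},\x g_\sigma}=\big(P_{+a_\sigma,\x g_\sigma}\big)_{+(a_{\rho_1}-a_\sigma)/g_\sigma}$ with $v\big((a_{\rho_1}-a_\sigma)/g_\sigma\big)=\gamma_\sigma-\gamma_\sigma=0$ lets me apply the invariance once more to get $\ndeg_{\ges\gamma_\sigma}P_{+a_{\rho_1}}=\ndeg P_{+a_{\rho_1},\x g_\sigma}=\ndeg P_{+a_\sigma,\x g_\sigma}=\ndeg_{\ges\gamma_\sigma}P_{+a_\sigma}=m$, the last step by $\sigma>\rho_0$. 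Thus $\ndeg P_{+a_{\rho_1},\x g}\les m$ for every $g\prec\f v$, so $\ndeg_{\prec\f v}P_{+a_{\rho_1}}\les m$. Combining the two inequalities yields the identity, and the ``in particular'' follows as explained.

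The hard part will be the reverse inequality: to bound $\ndeg P_{+a,\x g}$ by $m$ \emph{uniformly} over all sufficiently small $g$, one must compare the differential polynomial conjugated at two different terms $a_\sigma$ and $a_{\rho_1}$ of the pc-sequence, and this only works because the multiplicative conjugating element may be taken of value \emph{exactly} $\gamma_\sigma$, so that the resulting additive shift is $\asymp 1$ (where Newton degree is unchanged) rather than $\succ 1$ (where it need not be). It is worth noting that divisibility of $\Gamma$ plays no role: the only values used are of the form $\gamma_\rho$ or $v(a-\ell)$, which lie in $\Gamma$ by construction, so the argument goes through over an arbitrary ordered abelian value group.
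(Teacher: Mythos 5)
Your proof is correct. Note first that the paper does not prove this lemma---it is quoted verbatim from \cite[Lemma~11.4.12]{adamtt}---so there is no proof in the paper to compare against; I am judging the argument on its own merits, and it does check out.

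Both directions are sound. For ``$\les$'': given an admissible pair $(a,\f v)$ with $\delta=v(a-\ell)$, the cofinality of $(\gamma_\rho)$ in $v(\ell-K)$ (which you correctly justify from divergence) produces $\rho>\rho_0$ with $\gamma_\rho>\delta$, and the factorization $P_{+a,\x g}=\bigl(P_{+a_\rho,\x g}\bigr)_{+(a-a_\rho)/g}$ with $vg=\delta$ gives an additive shift of valuation $0$, so your invariance fact together with the monotonicity of $\ndeg_{\ges\cdot}$ yields $\ndeg_{\prec\f v}P_{+a}\ges m$. For ``$\ges$'': your choice $a=a_{\rho_1}$, $v\f v=\gamma_\sigma$ with $\rho_0<\sigma<\rho_1$ is exactly right, and the crucial observation---that at the boundary $vg_\sigma=\gamma_\sigma$ the shift $(a_{\rho_1}-a_\sigma)/g_\sigma$ has valuation $0$, so $\ndeg_{\ges\gamma_\sigma}P_{+a_{\rho_1}}=\ndeg_{\ges\gamma_\sigma}P_{+a_\sigma}=m$---is where the argument lives, as you correctly flag. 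The two auxiliary facts you invoke are both in \cite{adamtt}: the identity $\ndeg_{\ges\gamma}Q=\ndeg Q_{\x g}$ for $vg=\gamma$ is recalled in the paper just before Lemma~\ref{ndegbasic}, and $\ndeg Q_{+c}=\ndeg Q$ for $c\prece 1$ follows from $\ddeg R_{+c}=\ddeg R$ in small derivation (essentially \cite[Lemma~6.6.5]{adamtt}) together with the commutation $(Q_{+c})^\phi=(Q^\phi)_{+c}$; your sketch of that reduction is accurate, including the remark that the induced additive conjugation of the dominant part by $\bar c$ (using the derivation on $\bm k$) preserves total degree. Your closing observation that no divisibility of $\Gamma$ is used is also correct and consistent with the lemma's placement in the paper's preliminaries under the standing hypothesis of rational asymptotic integration only.
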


\begin{cor}[{\cite[11.4.13]{adamtt}}]\label{adh11.4.13}
Suppose $(a_\rho)$ is divergent in $K$.
Then the following are equivalent:
\begin{enumerate}
	\item $P \in Z(K, \ell)$ has minimal complexity in $Z(K, \ell)$;
	\item $P$ is a minimal differential polynomial of $(a_\rho)$ over $K$.
\end{enumerate}
\end{cor}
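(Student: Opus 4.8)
The plan is to derive both implications from one construction. As $(a_\rho)$ is divergent in $K$, fix a pseudolimit $\ell$ of $(a_\rho)$ in some extension, as in the setup preceding the corollary, so $a_\rho \pconv \ell$ and $Z(K,\ell)$ is defined. First, suppose $Z(K,\ell) \neq \emptyset$ and pick $Q \in Z(K,\ell)$ of minimal complexity. By Lemma~\ref{adh11.4.8} there is an immediate extension $K\langle g\rangle$ of $K$ with $Q(g)=0$ and $v(a-g)=v(a-\ell)$ for all $a \in K$; specializing to $a=a_\rho$ gives $v(a_\rho-g)=v(a_\rho-\ell)$, eventually strictly increasing, i.e.\ $(a_\rho)$ pseudoconverges to $g$ in $K\langle g\rangle$. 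Since $K\langle g\rangle$ is immediate over $K$, it has the same value group and asymptotic couple as $K$, hence inherits rational asymptotic integration, and $\Gamma^{<}=\Gamma^{<}_{K\langle g\rangle}$; also $\deg Q \ges \ndeg_{\bm a} Q \ges 1$ by Lemma~\ref{ndegbasic}(1) and \cite[11.4.12]{adamtt}, so $Q \in K\langle g\rangle\{Y\}\setminus K\langle g\rangle$. Thus Lemma~\ref{adh11.3.8}, applied inside $K\langle g\rangle$ with $G\coloneqq Q$ and pseudolimit $g$, produces a pc-sequence $(b_\lambda)$ in $K$ equivalent to $(a_\rho)$ with $Q(b_\lambda)\pconv Q(g)=0$. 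So: whenever $Z(K,\ell)\neq\emptyset$, any $Q$ of minimal complexity in $Z(K,\ell)$ satisfies $Q(b_\lambda)\pconv 0$ for some pc-sequence $(b_\lambda)$ in $K$ equivalent to $(a_\rho)$; call this observation $(\ast)$.

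For $(1)\Rightarrow(2)$, suppose $P$ has minimal complexity in $Z(K,\ell)$. By $(\ast)$ there is a pc-sequence $(b_\lambda)$ in $K$ equivalent to $(a_\rho)$ with $P(b_\lambda)\pconv 0$, so $P$ satisfies the defining condition of a minimal differential polynomial of $(a_\rho)$ over $K$, except possibly for complexity-minimality. But if some $R$ of strictly smaller complexity also satisfied that condition, say $R(b'_\lambda)\pconv 0$ with $(b'_\lambda)$ a pc-sequence in $K$ equivalent to $(a_\rho)$, then $(b'_\lambda)$ is divergent in $K$ with $b'_\lambda \pconv \ell$ (equivalent pc-sequences share all pseudolimits), so Lemma~\ref{adh11.4.11} would give $R\in Z(K,\ell)$, contradicting minimality of $c(P)$ in $Z(K,\ell)$. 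Hence $P$ is a minimal differential polynomial of $(a_\rho)$ over $K$.

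For $(2)\Rightarrow(1)$, suppose $P$ is a minimal differential polynomial of $(a_\rho)$ over $K$, so $P(b_\lambda)\pconv 0$ for some pc-sequence $(b_\lambda)$ in $K$ equivalent to $(a_\rho)$, and $c(P)$ is minimal among differential polynomials with this property. As $(b_\lambda)$ is divergent in $K$ with $b_\lambda \pconv \ell$, Lemma~\ref{adh11.4.11} gives $P\in Z(K,\ell)$; in particular $Z(K,\ell)\neq\emptyset$. Pick $Q\in Z(K,\ell)$ of minimal complexity. By $(\ast)$, $Q(b'_\lambda)\pconv 0$ for some pc-sequence $(b'_\lambda)$ in $K$ equivalent to $(a_\rho)$, so minimality of $c(P)$ among differential polynomials with that property yields $c(P)\les c(Q)$; and $c(Q)\les c(P)$ since $P\in Z(K,\ell)$. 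Thus $c(P)=c(Q)$ is minimal in $Z(K,\ell)$, i.e.\ $P$ has minimal complexity in $Z(K,\ell)$.

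I expect the one point that needs care is verifying the hypotheses of Lemma~\ref{adh11.3.8} for the extension $K\langle g\rangle$ furnished by Lemma~\ref{adh11.4.8} --- namely that $K\langle g\rangle$ still has rational asymptotic integration and that $\Gamma^{<}$ remains cofinal in $\Gamma^{<}_{K\langle g\rangle}$ --- but both are automatic since an immediate extension preserves the value group and the asymptotic couple. The rest is just bookkeeping with complexities together with repeated applications of Lemma~\ref{adh11.4.11}.
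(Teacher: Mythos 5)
The paper does not prove this statement---it cites \cite[Corollary~11.4.13]{adamtt} directly---so there is no in-paper proof to compare against. Your reconstruction from the surrounding lemmas is correct. The key observation $(\ast)$, that any $Q$ of minimal complexity in $Z(K,\ell)$ admits an equivalent pc-sequence $(b_\lambda)$ with $Q(b_\lambda)\pconv 0$, is obtained exactly as it should be: Lemma~\ref{adh11.4.8} gives the immediate extension $K\langle g\rangle$ with $Q(g)=0$ and $a_\rho\pconv g$, and Lemma~\ref{adh11.3.8} applied over $K\langle g\rangle$ (whose hypotheses hold because immediate extensions preserve $\Gamma$, hence rational asymptotic integration and cofinality of $\Gamma^{<}$) produces the required $(b_\lambda)$. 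Your check that $Q\notin K\langle g\rangle$ via Lemma~\ref{ndegbasic}(1) and \cite[11.4.12]{adamtt} is a nice way to verify the hypothesis $G\in E\{Y\}\setminus E$. The two implications then follow by routine complexity bookkeeping using Lemma~\ref{adh11.4.11}, as you carry out. The only cosmetic point is that $\ell$ is already fixed in the preamble to this block of lemmas, and the implicit standing assumption $a_\rho\pconv\ell$ (visible in Lemma~\ref{adh11.4.11}) is what ties $Z(K,\ell)$ to $(a_\rho)$; you correctly make this explicit at the start. This matches the intended route in \cite{adamtt}.
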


\section{Removing divisibility}
The following proposition is the main tool in proving the desired results in the case $\Gamma$ is divisible.
Its proof uses the differential newton diagram method of \cite{adamtt}, as well as the rather technical notion of unravellers, and is spread over several chapters of \cite{adamtt}.
\begin{prop}[{\cite[14.5.1]{adamtt}}]\label{adh14.5.1}
Suppose $K$ is $\upomega$-free and $\d$-valued with divisible $\Gamma$.
Let $P$ be a minimal differential polynomial of $(a_\rho)$ over $K$.
Then $\ndeg_{\bm a} P=1$.
\end{prop}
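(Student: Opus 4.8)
The plan is to establish $\ndeg_{\bm a} P \ges 1$ and $\ndeg_{\bm a} P \les 1$ separately; the first is routine and the second is the whole point. We may assume $(a_\rho)$ is divergent in $K$, since otherwise its minimal differential polynomial is of the form $Y-a$ with $a \in K$, and one checks directly that $\ndeg_{\bm a}(Y-a)=1$. By Lemma~\ref{omegalambda}, $K$ has rational asymptotic integration, so $(a_\rho)$ has a pseudolimit $\ell$ in some immediate asymptotic extension, where $v(\ell-K)$ has no largest element; by Corollary~\ref{adh11.4.13}, $P$ lies in $Z(K,\ell)$ and has minimal complexity there. For the lower bound, Lemma~\ref{adh11.4.8} yields an immediate extension $K\langle f\rangle$ with $P(f)=0$ and $v(a-f)=v(a-\ell)$ for all $a\in K$, so that $f$ is a pseudolimit of $(a_\rho)$; then $\ndeg_{\bm a} P\ges 1$ by Lemma~\ref{ndegbasic}(vii).

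For the upper bound I would argue by contradiction: suppose $\ndeg_{\bm a} P=d\ges 2$. It suffices to produce a nonzero $Q\in K\{Y\}$ with $c(Q)<c(P)$ and $Q\in Z(K,\ell)$, contradicting the minimality of $c(P)$ in $Z(K,\ell)$. In turn, it is enough to find, in some immediate asymptotic extension of $K$, a pseudolimit $z$ of $(a_\rho)$ satisfying $Q(z)=0$ for some such $Q$: given this, Lemma~\ref{adh11.3.8} supplies a pc-sequence $(b_\lambda)$ in $K$ equivalent to $(a_\rho)$ with $\big(Q(b_\lambda)\big)$ a pc-sequence and $Q(b_\lambda)\pconv Q(z)=0$, and then Lemma~\ref{adh11.4.11}, applied to $(b_\lambda)$ in place of $(a_\rho)$ (they have the same cut), gives $Q\in Z(K,\ell)$.

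Thus everything reduces to finding such $z$ and $Q$, and this is the main obstacle; it is exactly here that the divisibility of $\Gamma$, together with $\upomega$-freeness and $\d$-valuedness, enters, via the differential Newton diagram method of \cite{adamtt}. One first normalizes: using the invariance of newton degree in a cut under additive, multiplicative, and compositional conjugation (Lemma~\ref{ndegbasic}(ii),(iii),(v)) together with \cite[Lemma~11.4.12]{adamtt}, one replaces $P$ by a conjugate $\big((P_{+a})_{\x g}\big)^{\phi}$ --- for suitable $a\in K$ near $\ell$, $g\in K^{\x}$ with $g\prec\f v$ and $a-\ell\prec\f v$, and an active $\phi\in K^{\x}$ --- whose dominant part $D\in\bm k\{Y\}$ has degree $d\ges 2$, while its complexity is still $c(P)$. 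Since $d\ges 2$, the differential Newton diagram of this conjugate has genuine ``room,'' and the unraveller machinery of \cite{adamtt} --- whose construction relies on the value group being divisible, and where $\upomega$-freeness (equivalently, by Lemma~\ref{omegalambda}, $\uplambda$-freeness plus rational asymptotic integration) rules out the pathological $\upomega$- and $\uplambda$-type slopes that would otherwise obstruct it --- produces a proper immediate refinement of the cut of $(a_\rho)$, namely a pseudolimit of $(a_\rho)$ in an immediate asymptotic extension satisfying a differential equation of complexity strictly below $c(P)$. Undoing the conjugations yields the sought $z$ and $Q$, whence $d=1$. It is precisely this reliance on divisibility that the rest of the paper is designed to remove.
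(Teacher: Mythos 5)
This proposition is not proved in the paper: it is imported verbatim as \cite[Proposition~14.5.1]{adamtt}, with the author explicitly noting that its proof ``uses the differential newton diagram method of \cite{adamtt}, as well as the rather technical notion of unravellers, and is spread over several chapters of \cite{adamtt}.'' There is therefore no in-paper argument to compare against; the paper treats the proposition as a black box, and removing the divisibility hypothesis from it is the whole point of the rest of the paper.

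On its own terms, the outer layers of your sketch are sound. The reduction to the divergent case is fine, and the lower bound $\ndeg_{\bm a} P \ges 1$ is handled correctly via Corollary~\ref{adh11.4.13}, Lemma~\ref{adh11.4.8}, and Lemma~\ref{ndegbasic}(vii) (it also falls out immediately from the part of \cite[Lemma~11.4.12]{adamtt} quoted just before Corollary~\ref{adh11.4.13} once $P \in Z(K,\ell)$ is known). Your reduction of the upper bound to producing $Q \in Z(K,\ell)$ with $c(Q) < c(P)$, and further to exhibiting a pseudolimit $z$ of $(a_\rho)$ in an immediate asymptotic extension with $Q(z)=0$, is also correct, and the passage through Lemma~\ref{adh11.3.8} and Lemma~\ref{adh11.4.11} (applied to the equivalent pc-sequence $(b_\lambda)$, which still pseudoconverges to $\ell$) holds up. But what you flag as ``the main obstacle'' --- actually producing such a $z$ and $Q$ when $\ndeg_{\bm a} P \ges 2$ --- is where the entire content of the proposition lives, and your treatment of it is a paraphrase of what the unraveller machinery is supposed to accomplish rather than an argument a reader could check (I would also be cautious about your specific claim that it directly hands you a witness of strictly smaller complexity; the ADH argument is organized somewhat differently). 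So your proposal, like the paper, reduces the statement to the machinery of \cite{adamtt} rather than proving it; it is an honest and structurally reasonable sketch, but it is not a proof and should not be graded as one.
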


The following is the main lemma that will allow us to replace the divisibility of $\Gamma$ in the previous result with the assumption that $K$ is henselian.
The main theorems are then proven using the new proposition and arguments with henselizations.

\begin{lem}\label{mdpac}
Suppose $K$ is henselian and has rational asymptotic integration.
Let $P$ be a minimal differential polynomial of $(a_\rho)$ over $K$.
Then $P$ remains a minimal differential polynomial of $(a_\rho)$ over the algebraic closure $K^{\ac}$ of $K$.
\end{lem}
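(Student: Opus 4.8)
The plan is to show that $P$, a minimal differential polynomial of $(a_\rho)$ over $K$, remains minimal over $K^{\ac}$ by a two-part argument: first, $P$ is still a *differential polynomial of $(a_\rho)$* over $K^{\ac}$ (i.e.\ $P(b_\lambda)\pconv 0$ for some pc-sequence equivalent to $(a_\rho)$)---this is immediate since the witnessing pc-sequence $(b_\lambda)$ in $K$ already lies in $K^{\ac}$; and second, that no differential polynomial over $K^{\ac}$ of strictly smaller complexity has this property. The second part is the real content. Suppose $(a_\rho)$ is divergent in $K$; if it converges in $K$ then it converges in $K^{\ac}$ and the statement is trivial (indeed $P$ would have a zero, contradicting minimality unless $P$ is essentially degenerate, but in any case the notion is preserved). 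So assume divergence. Fix a pseudolimit $\ell$ of $(a_\rho)$ in some extension; since $K$ has rational asymptotic integration and $K^{\ac}$ does too (by the discussion after Lemma~\ref{omegalambda}), and $\Gamma$ is cofinal in $\Q\Gamma$ so $\Gamma^{<}$ is cofinal in $(\Q\Gamma)^{<}$, we may apply the machinery of \S\ref{prelim} over both $K$ and $K^{\ac}$.

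The key step is to translate minimality of complexity into the language of $Z(K,\ell)$ and $Z(K^{\ac},\ell)$ via Corollary~\ref{adh11.4.13}: $P$ minimal over $K$ means $P$ has minimal complexity in $Z(K,\ell)$, and we want $P$ to have minimal complexity in $Z(K^{\ac},\ell)$. By Lemma~\ref{adh11.4.8} applied over $K$ with this $P$, we get an immediate extension $K\langle f\rangle$ of $K$ with $P(f)=0$ and $v(a-f)=v(a-\ell)$ for all $a\in K$; in particular $a_\rho\pconv f$ over $K\langle f\rangle$. Now I would form $K^{\ac}\langle f\rangle$ inside a suitable common extension (algebraic over $K\langle f\rangle$) and analyze $Z(K^{\ac},\ell)$. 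Suppose for contradiction that some $Q\in K^{\ac}\{Y\}$ with $c(Q)<c(P)$ lies in $Z(K^{\ac},\ell)$. By Lemma~\ref{adh11.4.11} (or rather its contrapositive together with Lemma~\ref{adh11.3.8}), $Q$ being in $Z(K^{\ac},\ell)$ forces $(Q(b_\lambda))$ to pseudoconverge to $0$ for a suitable pc-sequence equivalent to $(a_\rho)$; so $Q$ is a differential polynomial of $(a_\rho)$ over $K^{\ac}$ of complexity $<c(P)$. The obstacle---flagged in the introduction---is that $Q$ has coefficients in $K^{\ac}$, not $K$, so we cannot directly contradict minimality of $P$ over $K$.

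To overcome this, I would use henselianity of $K$ crucially, in the following form. The coefficients of $Q$ generate a finite extension $K(\bar c)\subseteq K^{\ac}$ of $K$; since $K$ is henselian with residue field of characteristic $0$, this extension is *immediate* over... no---rather, the point is more delicate: $K(\bar c)$ need not be immediate. Instead, the plan is: because $K$ is henselian, for the pc-sequence $(b_\lambda)$ of $Q$-minimal complexity, one can run the argument of \cite[\S14.5]{adamtt} type---or more precisely, use that a minimal differential polynomial over a henselian field descends. Concretely, I would argue that if $(a_\rho)$ has a minimal differential polynomial of complexity $c$ over $K^{\ac}$, then, by taking norms/traces of coefficients (Galois-averaging over $\mathrm{Gal}(K^{\ac}/K)$, legitimate in residue characteristic $0$) and using that $K$ is henselian so that the relevant valuation-theoretic data is unchanged, one produces a differential polynomial over $K$ of the same complexity $c$ still satisfying $P'(b'_\lambda)\pconv 0$ for an equivalent pc-sequence---hence $c\ges c(P)$, contradiction. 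The hard part is making this Galois-descent argument compatible with the pseudoconvergence condition and the complexity bookkeeping (orders, degrees in the top variable), and this is exactly where henselianity of $K$---ensuring that $K^{\ac}/K$ is ``tame'' enough and that cuts are preserved---is indispensable; I expect the bulk of the work to be verifying that the descended polynomial genuinely has complexity $\les c$ and still vanishes at the cut.
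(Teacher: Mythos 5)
Your high-level plan is right: reduce to the divergent case, identify the content as showing no $Q\in K^{\ac}\{Y\}$ of smaller complexity lies in $Z(K^{\ac},\ell)$, and recognize that the hard part is descending such a $Q$ to $K$ while controlling complexity and pseudoconvergence. But the descent mechanism you propose---Galois-averaging via norms/traces---is not the right tool and you correctly sense that you cannot make it work. The problem is that traces and norms interact badly with pseudoconvergence: $\operatorname{Tr}_{L/K}(Q)(b_\lambda)$ is a sum of Galois conjugates of $Q(b_\lambda)$, and nothing forces this sum to inherit $Q(b_\lambda)\pconv 0$ (the trace can be much smaller, or even vanish identically), nor does the norm behave additively at all. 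You also do not verify that $(a_\rho)$ remains \emph{divergent} in $K^{\ac}$; this is needed to apply the $Z(K^{\ac},\ell)$ machinery, and it is precisely one of the two places where henselianity is used (henselian implies algebraically maximal, so a pseudolimit in $K^{\ac}$ would descend to one in $K$, via the minimum polynomial).

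The actual descent is more elementary and exploits henselianity differently. Since $Q$ has coefficients in a finite extension $L\subseteq K^{\ac}$ with $[L:K]=n$, and since $K$ henselian gives $[L:K]=[\Gamma_L:\Gamma]\cdot[\bm k_L:\bm k]$, there is a \emph{valuation basis} $e_1,\dots,e_n$ of $L$ over $K$: a $K$-basis along which the valuation of any $K$-linear combination is the minimum of the valuations of the terms. Writing $Q=\sum_i R_i\cdot e_i$ with $R_i\in K\{Y\}$, one gets $c(R_i)\les c(Q)<c(P)$ for each $i$, and for each $\lambda$, $Q(b_\lambda)\asymp R_{i(\lambda)}(b_\lambda)\cdot e_{i(\lambda)}$ for some index $i(\lambda)$. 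Passing to a cofinal subsequence fixes a single $i$ with $R_i(b_\lambda)\pconv 0$, contradicting minimality of $P$ over $K$. This replaces the global, poorly-controlled averaging by a pointwise ``pick the dominant coordinate'' argument, which is exactly what a valuation basis is for. So the gap in your proposal is the absence of the valuation-basis decomposition; everything you flag as ``the hard part'' is resolved by it.
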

\begin{proof}
Recall that $K^{\ac}$ is an extension of $K$ with $\Psi_{K^{\ac}}=\Psi$.
Since $K$ has rational asymptotic integration, so does $K^{\ac}$.
Note also that  $\Gamma^{\neq}$ has no smallest archimedean class because $\psi$ is constant on each archimedean class and $K$ is ungrounded.
It follows that $\Gamma^{<}$ is cofinal in $\Gamma^{<}_{K^{\ac}} = (\Q\Gamma)^{<}$.

We may suppose that $(a_\rho)$ is divergent in $K$, the other case being trivial.
Then $(a_\rho)$ must still be divergent in $K^{\ac}$:
If it had a pseudolimit $a \in K^{\ac}$, then we would have $Q(a_\rho) \pconv 0$, where $Q \in K[Y]$ is the minimum polynomial of $a$ over $K$ (see \cite[Proposition~3.2.1]{adamtt}).
But since $K$ is henselian, it is algebraically maximal (see \cite[Corollary~3.3.21]{adamtt}), and then $(a_\rho)$ would have a pseudolimit in $K$.

Now, suppose to the contrary that $Q$ is a minimal differential polynomial of $(a_\rho)$ over $K^{\ac}$ with $c(Q)<c(P)$. 
Take an extension $L \subseteq K^{\ac}$ of $K$ with $Q \in L\{Y\}$ and $[L : K] = n < \infty$.
Since $K$ is henselian, $[L:K]=[\Gamma_L:\Gamma] \cdot [\bm k_L : \bm k]$ (see \cite[Corollary~3.3.49]{adamtt}), and thus we have a valuation basis $\ca B=\{e_1, \dots, e_n\}$ of $L$ over $K$ (see \cite[Proposition~3.1.7]{adamtt}).
That is, $\ca B$ is a basis of $L$ over $K$, and for all $a_1,\dots,a_n \in K$,
\[v\left(\sum_{i=1}^n a_i e_i\right)\ =\ \min_{1 \les i \les n} v(a_i e_i).\]
Then by expressing the coefficients of $Q$ in terms of the valuation basis,
\[Q(Y)\ =\ \sum_{i=1}^n R_i(Y) \cdot e_i,\]
where $R_i(Y) \in K\{Y\}$ for $1 \les i \les n$.

Since $Q$ is a minimal differential polynomial of $(a_\rho)$ over $K^{\ac}$, by Corollary~\ref{adh11.4.13} and Lemma~\ref{adh11.4.8} we have an immediate extension $K^{\ac}\langle a\rangle$ of $K^{\ac}$ with $a_\rho \pconv a$ and $Q(a)=0$.
Then by Lemma~\ref{adh11.3.8}, there is a pc-sequence $(b_\lambda)$ in $K$ equivalent to $(a_\rho)$ such that $Q(b_\lambda) \pconv Q(a)=0$.
Finally, after passing to a cofinal subsequence, we have $i$ with $Q(b_\lambda) \asymp R_i(b_\lambda) \cdot e_i$ for all $\lambda$.
Then $R_i(b_\lambda) \pconv 0$ and $c(R_i)<c(P)$, contradicting the minimality of $P$.
\end{proof}

Note that in the next results, we assume that $K$ is $\upomega$-free, and so has rational asymptotic integration by Lemma~\ref{adh11.6.8}.
This makes the previous lemma available.

\begin{prop}\label{mainlemmah}
Suppose $K$ is $\upomega$-free, $\d$-valued, and henselian.
Let $P$ be a minimal differential polynomial of $(a_\rho)$ over $K$.
Then $\ndeg_{\bm a} P=1$.
\end{prop}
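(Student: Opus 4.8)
The plan is to pass to the algebraic closure $K^{\ac}$, where the value group $\Q\Gamma$ is divisible, apply Proposition~\ref{adh14.5.1} there, and transfer the conclusion back to $K$. The hypothesis that $K$ is henselian is used only to make Lemma~\ref{mdpac} applicable.

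First I would check that $K^{\ac}$ satisfies the hypotheses of Proposition~\ref{adh14.5.1}. Being $\upomega$-free, $K$ is $\uplambda$-free and hence has rational asymptotic integration by Lemma~\ref{omegalambda}, which also makes Lemma~\ref{mdpac} available. The field $K^{\ac}$ is again $\upomega$-free by Lemma~\ref{adh11.7.23}, it is $\d$-valued since $K$ is (by \cite[Corollary~10.1.23]{adamtt}, as recalled in \S\ref{prelim}), its value group is the divisible hull $\Q\Gamma$ of $\Gamma$, and $\Psi_{K^{\ac}} = \Psi$. Next, by Lemma~\ref{mdpac}, applied to the henselian field $K$ with rational asymptotic integration, $P$ remains a minimal differential polynomial of $(a_\rho)$ over $K^{\ac}$. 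Proposition~\ref{adh14.5.1} applied to $K^{\ac}$ and the pc-sequence $(a_\rho)$ (which of course remains a pc-sequence in $K^{\ac}$) then yields $\ndeg_{\bm a_{K^{\ac}}} P = 1$, where $\bm a_{K^{\ac}} = c_{K^{\ac}}(a_\rho)$. Finally, since $\Psi = \Psi_{K^{\ac}}$ is in particular cofinal in $\Psi_{K^{\ac}}$, Lemma~\ref{ndegbasic}(8) gives $\ndeg_{\bm a} P = \ndeg_{\bm a_{K^{\ac}}} P = 1$.

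The substance of the argument is thus carried entirely by Lemma~\ref{mdpac}, the preservation of minimality of the differential polynomial under passage to $K^{\ac}$, which has already been proved; this is the step where the real difficulty lies and where henselianity is needed. The remaining steps are routine: verifying that $K^{\ac}$ inherits $\upomega$-freeness, differential-valuedness, and the asymptotic couple $(\Q\Gamma, \psi)$ with $\Psi$ unchanged, and observing that newton degree in a cut is unaffected by the extension $K \subseteq K^{\ac}$ precisely because $\Psi_{K^{\ac}} = \Psi$.
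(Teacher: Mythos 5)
Your proposal is correct and follows essentially the same route as the paper: pass to $K^{\ac}$, verify that $K^{\ac}$ is $\upomega$-free, $\d$-valued, and has divisible value group, invoke Lemma~\ref{mdpac} to keep $P$ minimal over $K^{\ac}$, apply Proposition~\ref{adh14.5.1}, and transfer back via Lemma~\ref{ndegbasic}(8) using $\Psi_{K^{\ac}}=\Psi$. The only (cosmetic) difference is that you correctly cite Lemma~\ref{omegalambda} for the implication ``$\upomega$-free $\Rightarrow$ rational asymptotic integration,'' whereas the paper's remark just before the proposition points to Lemma~\ref{adh11.6.8}.
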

\begin{proof}
By the previous lemma, $P$ remains a minimal differential polynomial of $(a_\rho)$ over $K^{\ac}$.
Also,~$K^{\ac}$ is $\upomega$-free by Lemma~\ref{adh11.7.23} and $\d$-valued by \cite[Corollary~10.1.23]{adamtt}.
But then $\ndeg_{\bm a_{K^{\ac}}} P=1$ by Proposition~\ref{adh14.5.1}, and since $\Psi_{K^{\ac}}=\Psi$, Lemma~\ref{ndegbasic}(viii) gives $\ndeg_{\bm a} P=1$.
\end{proof}

Since being newtonian implies being henselian, we then can improve \cite[Corollary~14.5.2]{adamtt} using the same proof; this result contains Theorem~\ref{mainequiv}.

\begin{thm}\label{newttoasympdalgmax}
Suppose $K$ is $\upomega$-free.
Then the following are equivalent:
\begin{enumerate}
\item $K$ is newtonian;
\item $K$ is strongly newtonian;
\item $K$ is asymptotically $\d$-algebraically maximal.
\end{enumerate}
\end{thm}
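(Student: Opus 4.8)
The plan is to prove the cycle $(1)\Rightarrow(2)\Rightarrow(3)\Rightarrow(1)$. Since $(2)$ immediately entails $(1)$ by the definition of strongly newtonian, this suffices for the full equivalence, and $(2)\Rightarrow(3)$ and $(3)\Rightarrow(1)$ are essentially the arguments already in \cite{adamtt}; the only genuinely new ingredient is Proposition~\ref{mainlemmah}, which enters in $(1)\Rightarrow(2)$.

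For $(1)\Rightarrow(2)$, I would note that a newtonian $K$ is in particular henselian, and, being also $\upomega$-free and ungrounded $H$-asymptotic, is $\d$-valued by \cite[Lemma~14.2.5]{adamtt} together with Lemma~\ref{omegalambda}. Hence Proposition~\ref{mainlemmah} applies, and gives $\ndeg_{\bm a}P=1$ for every pc-sequence $(a_\rho)$ in $K$ with a minimal differential polynomial $P$ over $K$; by definition this makes $K$ strongly newtonian. This is exactly where Proposition~\ref{mainlemmah} stands in for Proposition~\ref{adh14.5.1}, which was used when $\Gamma$ was divisible.

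For $(2)\Rightarrow(3)$, suppose $K$ is strongly newtonian and, towards a contradiction, that $L$ is a proper immediate $\d$-algebraic asymptotic extension of $K$. Fix $a\in L\setminus K$. Since $L$ is immediate over $K$, the set $v(a-K)$ has no largest element (a standard consequence of $\Gamma_L=\Gamma$ and $\bm k_L=\bm k$), so $a$ is the pseudolimit of some divergent pc-sequence $(a_\rho)$ in $K$. As $a$ is $\d$-algebraic over $K$, some $G\in K\{Y\}^{\neq}$ has $G(a)=0$, so $Z(K,a)\neq\emptyset$, for otherwise Lemma~\ref{adh11.4.7} (with $\ell=a$) would force $G(a)\neq 0$; let $P\in Z(K,a)$ have minimal complexity. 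By Corollary~\ref{adh11.4.13}, $P$ is a minimal differential polynomial of $(a_\rho)$ over $K$, so $\ndeg_{\bm a}P=1$ since $K$ is strongly newtonian, and then Lemma~\ref{adh14.1.10} produces $b\in K$ with $a_\rho\pconv b$ and $P(b)=0$, contradicting the divergence of $(a_\rho)$ in $K$. Hence $K$ has no such extension, i.e., it is asymptotically $\d$-algebraically maximal.

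For $(3)\Rightarrow(1)$, since $K$ is $\upomega$-free it is $\uplambda$-free by Lemma~\ref{omegalambda}, so Theorem~\ref{adh14.0.1} applies and gives that $K$ is newtonian. Thus all the substantive work is concentrated in Proposition~\ref{mainlemmah} (and behind it Lemma~\ref{mdpac} and Proposition~\ref{adh14.5.1}); the remaining steps are routine, the only points needing attention being the verification of $\d$-valuedness in $(1)\Rightarrow(2)$ so that Proposition~\ref{mainlemmah} is applicable, and the check in $(2)\Rightarrow(3)$ that $\upomega$-freeness, hence rational asymptotic integration, makes the constructions of \S\ref{prelim} available.
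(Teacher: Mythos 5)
Your argument is correct and matches the paper's approach: the paper's proof of Theorem~\ref{newttoasympdalgmax} is precisely ``the proof of \cite[Corollary~14.5.2]{adamtt} with Proposition~\ref{mainlemmah} substituted for Proposition~\ref{adh14.5.1},'' and that proof is the cycle $(1)\Rightarrow(2)\Rightarrow(3)\Rightarrow(1)$ you spell out. You also correctly identify that the new ingredient enters only in $(1)\Rightarrow(2)$ and that $\d$-valuedness (via \cite[Lemma~14.2.5]{adamtt} and Lemma~\ref{omegalambda}) and henselianity are exactly what is needed to invoke Proposition~\ref{mainlemmah} there.
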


The next result gives Theorem~\ref{mainembed}. Its proof is the same as that of \cite[Corollary~14.5.4]{adamtt} after reducing to the henselian case, but is given for completeness.
\begin{thm}\label{newtonization}
Suppose $K$ is $\upomega$-free and $\d$-valued, and let $L$ be an immediate $\d$-algebraic extension of $K$ that is newtonian.
Then $L$ is a newtonization of $K$, and any other newtonization of $K$ is isomorphic to $L$ over $K$.
\end{thm}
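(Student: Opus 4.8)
The plan is to deduce both assertions from the single embedding statement $(\ast)$: \emph{every immediate $\d$-algebraic newtonian extension $N$ of $K$ embeds over $K$ into every newtonian extension $M$ of $K$}. Granting $(\ast)$, the given $L$ is a newtonization of $K$, since it is immediate, $\d$-algebraic, and newtonian over $K$. If $L'$ is any other newtonization of $K$, then by definition $L'$ embeds over $K$ into the newtonian field $L$; identifying $L'$ with its image, $L'$ is immediate and $\d$-algebraic over $K$, so $L$ is an immediate $\d$-algebraic (and asymptotic) extension of $L'$. But $L'$ is $\upomega$-free by Theorem~\ref{adh13.6.1} (it is $\d$-algebraic, hence pre-$\d$-valued, over the $\upomega$-free field $K$) and newtonian, hence asymptotically $\d$-algebraically maximal by Theorem~\ref{newttoasympdalgmax}; therefore $L=L'$, i.e.\ $L'\cong L$ over $K$.

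To prove $(\ast)$, fix $M$; it is henselian because it is newtonian, and $\Psi$ is cofinal in $\Psi_M$ (as $K$ is $\uplambda$-free). By Zorn's lemma, choose a pair $(E,\iota)$, maximal under the obvious ordering, where $E$ is a valued differential subfield of $N$ containing $K$ and $\iota\colon E\to M$ is an embedding over $K$. I claim $E=N$. Suppose not. If $E$ were not henselian, its henselization inside $N$ would properly extend $E$, would still be immediate and $\d$-algebraic over $K$, and $\iota$ would extend to it (as $M$ is henselian), contradicting maximality; so $E$ is henselian. Since $N$ is immediate over $E$ and $E\neq N$, there is a divergent pc-sequence $(a_\rho)$ in $E$ with a pseudolimit $\ell\in N$. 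As $\ell$ is $\d$-algebraic over $E$, Lemmas~\ref{adh11.3.8} and~\ref{adh11.4.11} give $Z(E,\ell)\neq\emptyset$, so $(a_\rho)$ has a minimal differential polynomial $P$ over $E$ by Corollary~\ref{adh11.4.13}.

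Now comes the crucial point. The field $E$ is henselian, $\upomega$-free (again by Theorem~\ref{adh13.6.1}), and $\d$-valued (by Lemma~\ref{adh9.1.2}, being immediate over the $\d$-valued field $K$), so Proposition~\ref{mainlemmah} yields $\ndeg_{\bm a}P=1$. By Lemma~\ref{ndegbasic}(viii) this newton degree is unchanged in $N$ and, after transporting along $\iota$, in $M$. Hence Lemma~\ref{adh14.1.10}, applied over the newtonian fields $N$ and $M$, produces $g\in N$ with $a_\rho\pconv g$ and $P(g)=0$, and $b\in M$ with $\iota(a_\rho)\pconv b$ and $\iota(P)(b)=0$. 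Both $g$ and $b$ realize the cut of $(a_\rho)$, matched by $\iota$, so by the uniqueness clause of Lemma~\ref{adh11.4.8} the subfield $E\langle g\rangle$ of $N$ is isomorphic over $E$ to the field $E\langle f\rangle$ of that lemma, which embeds over $K$ into $M$, extending $\iota$ and sending $f$ to $b$. Composing, $\iota$ extends to an embedding $E\langle g\rangle\to M$ over $K$; since $g\notin E$ (otherwise $(a_\rho)$ would converge in $E$), this contradicts the maximality of $(E,\iota)$, so $E=N$ and $(\ast)$ is proved.

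The main obstacle is the inductive step just carried out. In the case of divisible $\Gamma$ one applies Proposition~\ref{adh14.5.1} to obtain $\ndeg_{\bm a}P=1$, but Proposition~\ref{mainlemmah} requires the base field to be henselian; so the construction must be organized to keep the relevant base field henselian — which is why the henselization is performed inside $N$ before a pseudolimit is adjoined — while simultaneously tracking $\upomega$-freeness and differential-valuedness of that base field through the transfinite process.
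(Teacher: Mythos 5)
Your proof is correct and takes essentially the same route as the paper's: reduce to the henselian case, produce a minimal differential polynomial of a divergent pc-sequence, apply Proposition~\ref{mainlemmah} to get newton degree one, transfer newton degree to the target, and use Lemmas~\ref{adh14.1.10} and~\ref{adh11.4.8} to extend the embedding one step at a time. You usefully make explicit the Zorn's lemma bookkeeping and the re-henselization of the intermediate field at each stage (which the paper performs only for $K$ at the outset and otherwise leaves tacit), and for uniqueness you invoke asymptotic $\d$-algebraic maximality via Theorem~\ref{newttoasympdalgmax} rather than the paper's Lemma~\ref{adh14.1.9}; both are sound.
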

\begin{proof}
Let $E$ be an extension of $K$ that is newtonian.
Note that the henselization $K^{\h}$ of $K$ is an immediate extension of $K$, so is $\d$-valued by Lemma~\ref{adh9.1.2}.
By Lemma~\ref{adh11.7.23}, $K^{\h}$ is $\upomega$-free.
Since $E$ is newtonian, it is henselian, so by embedding $K^{\h}$ in $E$, we may assume that $K$ is henselian.

The case $K=L$ being trivial, we may suppose that $K \neq L$.
It is sufficient to find $a \in L \setminus K$ such that $K\langle a\rangle$ embeds into $E$ over $K$, since any extension $F \subseteq L$ of $K$ is still $\upomega$-free and $\d$-valued by Theorem~\ref{adh13.6.1} and Lemma~\ref{adh9.1.2}, respectively.
Let $\ell \in L \setminus K$, so there is a divergent pc-sequence $(a_\rho)$ in~$K$ with $a_\rho \pconv \ell$.
Since $L$ is a $\d$-algebraic extension of $K$, $Z(K, \ell) \neq \emptyset$ by Lemma~\ref{adh11.4.7}, and so there is a minimal differential polynomial $P$ of $(a_\rho)$ over $K$ by Corollary~\ref{adh11.4.13}.
Hence by Proposition~\ref{mainlemmah}, $\ndeg_{\bm a} P=1$.
By the $\upomega$-freeness of $K$ and \cite[Corollary~13.6.13]{adamtt}, newton degree remains the same in $L$ and $E$, so $\ndeg_{\bm a_L} P = \ndeg_{\bm a_E} P = 1$.
Then Lemma~\ref{adh14.1.10} gives $a \in L \setminus K$ with $a_\rho \pconv a$ and $P(a)=0$, and $b \in E \setminus K$ with $a_\rho \pconv b$ and $P(b)=0$.
Then Lemma~\ref{adh11.4.8} gives an embedding of $K\langle a\rangle$ into $E$ over $K$.

The uniqueness of $L$ follows from its embedding property and Lemma~\ref{adh14.1.9}.
\end{proof}

Here is one last corollary, an improvement of \cite[Corollary~14.5.6]{adamtt} with the same proof.
\begin{cor}
Suppose $K$ is $\upomega$-free and $\d$-valued.
If $L = K(C_L)$ is an algebraic extension of $K$ that is newtonian, then so is $K$.
\end{cor}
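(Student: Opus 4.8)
The plan is to prove that $K$ is asymptotically $\d$-algebraically maximal and then invoke Theorem~\ref{newttoasympdalgmax}. Since $K$ is $\upomega$-free, that theorem reduces the corollary to showing $K$ has no proper immediate $\d$-algebraic asymptotic extension. For the other ingredient, observe that $L$ is $\d$-algebraic over $K$ (being algebraic) and pre-$\d$-valued (being asymptotic), so $L$ is $\upomega$-free by Theorem~\ref{adh13.6.1}; being also newtonian, $L$ is itself asymptotically $\d$-algebraically maximal by Theorem~\ref{newttoasympdalgmax}. The case $L = K$ is trivial, so assume $L \neq K$.

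Now suppose, toward a contradiction, that $M$ is a proper immediate $\d$-algebraic asymptotic extension of $K$. Since $L/K$ is algebraic, I would fix an embedding of $L$ over $K$ into an algebraic closure of $M$ carrying the unique extension of the derivation (such an embedding is automatically differential, as derivations on algebraic extensions are unique), and set $N := L \cdot M$ inside it, with some valuation extending that of $M$. Then $N/M$ is algebraic, so $N$ is an asymptotic extension of both $M$ and $L$ that is $\d$-algebraic over each. The key step is to check that $N/L$ is \emph{immediate}. Each $c \in C_L$ maps to a constant of $N$ algebraic over $K$, and differentiating its minimal polynomial over $K$ (using $\der c = 0$ and linear independence of the powers of $c$) shows that polynomial has coefficients in $C$; thus $L = K(C_L)$ and $N = M(C_L)$ are constant field extensions with $C_L/C$ algebraic, where $C_M = C$ by Lemma~\ref{adh9.1.2}. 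A routine induction, adjoining one algebraic constant at a time and combining the fundamental inequality $[\bm k' : \bm k_F]\,[\Gamma' : \Gamma_F] \les [F' : F]$ with the observation that (since $K$ is $\d$-valued, so the residue map identifies $C$ with $\bm k$) the minimal polynomial of an algebraic constant reduces to an irreducible polynomial over the residue field, shows that adjoining algebraic constants leaves the value group unchanged while enlarging the residue field by exactly the matching extension. Applying this over $K$ and over $M$ yields $\Gamma_N = \Gamma_M = \Gamma = \Gamma_L$ and $\bm k_N = \bm k(\overline{C_L}) = \bm k_L$, so $N/L$ is immediate and $\d$-algebraic; hence $N = L$ by maximality of $L$, and therefore $M \subseteq L$.

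To finish, $M \subseteq L = K(C_L)$ is algebraic over $K$, and since $M/K$ is immediate, every intermediate field $K \subseteq M_0 \subseteq M$ with $[M_0 : K] < \infty$ also has $\bm k_{M_0} = \bm k$ and $\Gamma_{M_0} = \Gamma$ (they are trapped between those of $K$ and those of $M$). Fixing such an $M_0$ and elements $c_1, \dots, c_m \in C_L$ with $M_0 \subseteq K' := K(c_1, \dots, c_m)$, the structural fact above gives $[K' : K] = [\bm k_{K'} : \bm k]$ and $\Gamma_{K'} = \Gamma$, while the fundamental inequality for $K'/M_0$ gives $[\bm k_{K'} : \bm k] \les [K' : M_0]$; hence $[K' : K] \les [K' : M_0]$, forcing $M_0 = K$. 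As $M_0$ was arbitrary, $M = K$, contradicting properness. So $K$ is asymptotically $\d$-algebraically maximal, hence newtonian.

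I expect the main obstacle to be the immediacy of $N/L$ in the middle paragraph: proving the structural fact that adjoining algebraic constants to a $\d$-valued field is residually algebraic and leaves the value group fixed, and bookkeeping it carefully enough to control both $\bm k_N$ and $\Gamma_N$ for the compositum (and, in the last paragraph, to exclude proper immediate subextensions of $K(C_L)/K$). Once that is in place, the two invocations of asymptotic $\d$-algebraic maximality close the argument.
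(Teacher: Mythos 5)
Your proof is correct, and the overall strategy — show $K$ is asymptotically $\d$-algebraically maximal and then invoke Theorem~\ref{newttoasympdalgmax} — is sound. The technical heart is the ``structural fact'' that for $\d$-valued $K$ and an asymptotic extension of the form $K(C')$ with $C'\supseteq C$ algebraic over $C$ consisting of constants, the value group does not grow and the residue field grows by exactly $[C':C]$, whence no proper subextension can be immediate over $K$. Your reasoning for this is right; one can streamline the ``routine induction'' by proving directly that $K$ and $C'$ are linearly disjoint over $C$ (a standard differential-field argument: take a minimal linear relation and differentiate), which gives $[K(C'):K]=[C':C]$ in one step and combines cleanly with the fundamental inequality and the identification $C\cong\bm k$.

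Where you diverge from what the paper almost certainly intends (it says ``same proof'' as \cite[Corollary~14.5.6]{adamtt}, which lives next to the newtonization result) is in forming the compositum $N=L\cdot M$ and appealing to $L$'s maximality. A shorter route uses Theorem~\ref{newtonization} directly: $K$ has an immediate $\d$-algebraic newtonian extension $K^{\mathrm{nt}}$, which by its embedding property embeds over $K$ into the newtonian $L=K(C_L)$; the structural fact then forces $K^{\mathrm{nt}}=K$, so $K$ is newtonian. This avoids the compositum and, in particular, the care needed in your construction: as written, you fix a field embedding $L\hookrightarrow M^{\ac}$ and then pick ``some valuation'' on $N$ extending $v_M$, which a priori need not restrict to the given $v_L$ on $L$ — and you use $L$'s maximality \emph{with its given valuation}. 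This gap is actually harmless here, because the same structural fact plus the fundamental inequality show that $v_K$ extends uniquely (even as a plain valuation) to each finite subextension of $K(C_L)$, hence to $L$, so the restriction must agree with $v_L$; but this deserves to be said explicitly. With that supplied, your compositum argument is a valid, if slightly longer, alternative to the newtonization route.
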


\section{Uniqueness}\label{unique}

For $\upomega$-free $\d$-valued $K$, it follows from Theorems~\ref{newttoasympdalgmax} and \ref{newtonization} that any two immediate $\d$-algebraic extensions of $K$ that are asymptotically $\d$-algebraically maximal are isomorphic over $K$.
In this section, we provide an alternative argument, making explicit some ideas only tacitly present in \cite{adamtt}.

\begin{lem}
Suppose $K$ has rational asymptotic integration.
Then every pc-sequence in $K$ with a minimal differential polynomial over $K$ has a pseudolimit in $K$ if and only if $K$ is asymptotically $\d$-algebraically maximal.
\end{lem}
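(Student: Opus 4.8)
The plan is to prove both implications by contraposition, using only the machinery recalled under ``Constructing immediate asymptotic extensions'': since $K$ has rational asymptotic integration, Lemmas~\ref{adh11.4.7} and~\ref{adh11.4.8} and Corollary~\ref{adh11.4.13} all apply.

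For the implication from asymptotic $\d$-algebraic maximality to the pc-sequence condition, I would argue contrapositively. Suppose $(a_\rho)$ is a pc-sequence in $K$ with a minimal differential polynomial $P$ over $K$ but no pseudolimit in $K$; then $(a_\rho)$ is divergent. Fix a pseudolimit $\ell$ of $(a_\rho)$ in some valued differential field extension of $K$ (one exists, e.g.\ in a sufficiently saturated elementary extension), and note that $v(\ell-K)$ has no largest element. By Corollary~\ref{adh11.4.13}, $P$ lies in $Z(K,\ell)$ and has minimal complexity there, so Lemma~\ref{adh11.4.8} produces an immediate asymptotic extension $K\langle f\rangle$ of $K$ with $P(f)=0$ and $v(a-f)=v(a-\ell)$ for all $a\in K$. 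Since $v(f-K)=v(\ell-K)$ has no largest element, $f\notin K$, so $K\langle f\rangle$ is proper; and it is $\d$-algebraic over $K$ because a minimal differential polynomial lies in $K\{Y\}\setminus K$. Hence $K$ is not asymptotically $\d$-algebraically maximal.

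For the reverse implication, again by contraposition: suppose $L$ is a proper immediate $\d$-algebraic asymptotic extension of $K$, and pick $\ell\in L\setminus K$. As $L$ is immediate, $v(\ell-K)$ has no largest element, so there is a divergent pc-sequence $(a_\rho)$ in $K$ with $a_\rho\pconv\ell$. Since $\ell$ is $\d$-algebraic over $K$, it is impossible that $P(\ell)\neq 0$ for every $P\in K\{Y\}^{\neq}$, so Lemma~\ref{adh11.4.7} forces $Z(K,\ell)\neq\emptyset$. Choosing $P\in Z(K,\ell)$ of minimal complexity, Corollary~\ref{adh11.4.13} identifies $P$ as a minimal differential polynomial of $(a_\rho)$ over $K$; and $(a_\rho)$, being divergent, has no pseudolimit in $K$. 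This yields the required pc-sequence.

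The statement is essentially a repackaging of Corollary~\ref{adh11.4.13} together with Lemmas~\ref{adh11.4.7} and~\ref{adh11.4.8}, so I do not expect a serious obstacle. The one point demanding care is matching ``minimal differential polynomial of $(a_\rho)$'' --- a property phrased through some equivalent pc-sequence $(b_\lambda)$ with $P(b_\lambda)\pconv 0$ --- with ``minimal complexity in $Z(K,\ell)$''; this is exactly the equivalence supplied by Corollary~\ref{adh11.4.13} once $(a_\rho)$ is known to be divergent. One should also keep in mind that $Z(K,\ell)$ depends only on the cut of $(a_\rho)$, so the choice of extension containing the pseudolimit $\ell$ is immaterial, and that an element of an immediate extension lying outside $K$ is automatically a pseudolimit of a divergent pc-sequence in $K$.
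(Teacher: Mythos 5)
Your proof is correct and takes essentially the same approach as the paper: both directions by contraposition, using Lemmas~\ref{adh11.4.7}, \ref{adh11.4.8}, and Corollary~\ref{adh11.4.13}. The only small difference is in the direction producing the bad pc-sequence from a proper immediate $\d$-algebraic extension: the paper starts from the minimal annihilator of $\ell$ and invokes Lemma~\ref{adh11.3.8} directly, while you route the same fact through the contrapositive of Lemma~\ref{adh11.4.7} and Corollary~\ref{adh11.4.13}, which is a slightly more packaged but equivalent argument.
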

\begin{proof}
Suppose first that there is $\ell \notin K$ in some immediate $\d$-algebraic extension of $K$ and let $P$ be its minimal annihilator over $K$.
There is a divergent pc-sequence $(a_\rho)$ in $K$ with $a_\rho \pconv \ell$, so by Lemma~\ref{adh11.3.8}, there is an equivalent pc-sequence $(b_\lambda)$ in $K$ with $P(b_\lambda) \pconv P(\ell)=0$.
Taking such a $P$ with minimal complexity, we obtain a minimal differential polynomial of $(a_\rho)$ over $K$.

For the other direction, let $(a_\rho)$ be a divergent pc-sequence in $K$ with minimal differential polynomial $P$ over $K$.
By \cite[Lemma~2.2.5]{adamtt}, or rather its proof, $(a_\rho)$ has a pseudolimit $\ell$ in an extension of $K$.
By Corollary~\ref{adh11.4.13}, $Z(K, \ell) \neq \emptyset$, so Lemma~\ref{adh11.4.8} gives a proper immediate extension of $K$.
\end{proof}

\begin{lem}\label{asympdalgmaxplimroot}
Suppose $K$ is $\upomega$-free, $\d$-valued, and henselian.
Let $P$ be a minimal differential polynomial of $(a_\rho)$ over $K$.
Let $L$ be a $\d$-valued extension of $K$ that is $\uplambda$-free and asymptotically $\d$-algebraically maximal.
Then $a_\rho \pconv b$ and $P(b)=0$ for some $b \in L$.
\end{lem}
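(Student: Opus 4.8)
The plan is to reduce to the situation in which $L$ is newtonian, to transfer the equality $\ndeg_{\bm a} P = 1$ from $K$ to $L$, and then to read off a zero of $P$. As in the proof of Lemma~\ref{mdpac}, I would first dispose of the case that $(a_\rho)$ is not divergent in $K$: then $(a_\rho)$ has a pseudolimit $\ell \in K$, so the differential polynomial $Y - \ell$ satisfies $(Y-\ell)(a_\rho) = a_\rho - \ell \pconv 0$; since $P \notin K$, comparison with this candidate forces $c(P) = (0,1,1)$, so $P = a(Y - b)$ for some $a \in K^{\x}$ and $b \in K$, and evaluating along a witnessing equivalent pc-sequence shows $a_\rho \pconv b$ while $P(b) = 0$, as required. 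So assume from now on that $(a_\rho)$ is divergent in $K$.

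Since $K$ is $\upomega$-free, $\d$-valued, and henselian, Proposition~\ref{mainlemmah} gives $\ndeg_{\bm a} P = 1$. Because $L$ is $\uplambda$-free and asymptotically $\d$-algebraically maximal, Theorem~\ref{adh14.0.1} shows that $L$ is $\upomega$-free and, crucially, newtonian. Using the $\upomega$-freeness of $K$ together with \cite[Corollary~13.6.13]{adamtt} --- exactly as in the proof of Theorem~\ref{newtonization} --- newton degree in the cut is unchanged in passing from $K$ to $L$, so $\ndeg_{\bm a_L} P = \ndeg_{\bm a} P = 1$. Finally, $(a_\rho)$ is a pc-sequence in $L$ and $P$ is a nonzero differential polynomial over $L$, so Lemma~\ref{adh14.1.10} applied to the newtonian field $L$ yields $b \in L$ with $a_\rho \pconv b$ and $P(b) = 0$, as desired.

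The genuine content is already contained in Proposition~\ref{mainlemmah} (and hence in Lemma~\ref{mdpac}); the only step needing care is the newton-degree transfer, where one must check that \cite[Corollary~13.6.13]{adamtt} applies to the extension $L$. This causes no trouble, since that result requires only $\upomega$-freeness of the ground field $K$ (as is clear from its use in the proof of Theorem~\ref{newtonization}) and our standing assumption that extensions are ungrounded $H$-asymptotic; the hypotheses that $L$ is $\uplambda$-free and asymptotically $\d$-algebraically maximal enter solely through Theorem~\ref{adh14.0.1}, to guarantee that $L$ is newtonian --- which is exactly what Lemma~\ref{adh14.1.10} demands.
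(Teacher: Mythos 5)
Your proof is correct and follows the same route as the paper: Proposition~\ref{mainlemmah} gives $\ndeg_{\bm a} P = 1$, Theorem~\ref{adh14.0.1} makes $L$ newtonian, \cite[Corollary~13.6.13]{adamtt} transfers the newton degree to $L$, and Lemma~\ref{adh14.1.10} finishes. The preliminary case split on whether $(a_\rho)$ is divergent is harmless but unnecessary---Proposition~\ref{mainlemmah} and Lemma~\ref{adh14.1.10} carry no divergence hypothesis, so the paper's proof (and the core of yours) handles both cases at once.
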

\begin{proof}
By Proposition~\ref{mainlemmah}, $\ndeg_{\bm a} P=1$.
By the $\upomega$-freeness of $K$ and \cite[Corollary~13.6.13]{adamtt}, $\ndeg_{\bm a_L} P = 1$.
By Theorem~\ref{adh14.0.1}, $L$ is newtonian, so we get $a_\rho \pconv b$ and $P(b)=0$ for some $b \in L$ from Lemma~\ref{adh14.1.10}.
\end{proof}

\begin{thm}\label{maximmed}
Suppose $K$ is $\upomega$-free and $\d$-valued.
Then any two immediate $\d$-algebraic extensions of $K$ that are asymptotically $\d$-algebraically maximal are isomorphic over $K$.
\end{thm}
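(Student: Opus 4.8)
The plan is to construct an isomorphism between the two extensions by a Zorn's lemma argument that enlarges a partial $K$-embedding one element at a time, using Lemma~\ref{asympdalgmaxplimroot} to supply the matching element on each side; the detour through henselizations below is forced by the henselian hypothesis in that lemma (and behind it in Proposition~\ref{mainlemmah}). So let $L_1$ and $L_2$ be immediate $\d$-algebraic extensions of $K$ that are asymptotically $\d$-algebraically maximal. First I would record that each $L_i$ is $\d$-valued by Lemma~\ref{adh9.1.2}, hence pre-$\d$-valued, hence $\upomega$-free by Theorem~\ref{adh13.6.1}, hence (Lemma~\ref{omegalambda}) $\uplambda$-free with rational asymptotic integration, hence newtonian by Theorem~\ref{adh14.0.1}, hence henselian. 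The same chain of implications applies to every valued differential subfield $E$ of $L_1$ (or of $L_2$) containing $K$, since such an $E$ is again immediate $\d$-algebraic over $K$.

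Next I would apply Zorn's lemma to the set $\Sigma$ of pairs $(E,\iota)$, where $E$ is a valued differential subfield of $L_1$ containing $K$ and $\iota \colon E \to L_2$ is a valued differential field embedding over $K$, ordered by extension; unions of chains are upper bounds, so there is a maximal $(E,\iota)$. I claim such an $E$ is henselian: its henselization $E^{\h}$ may be taken inside $L_1$ since $L_1$ is henselian, and $\iota$ extends uniquely to an embedding $E^{\h} \to L_2$ since $L_2$ is henselian, so maximality of $(E,\iota)$ forces $E = E^{\h}$. Thus $E$ is $\upomega$-free, $\d$-valued, henselian, with rational asymptotic integration, and in particular Lemma~\ref{asympdalgmaxplimroot} is available with base $E$.

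The heart of the matter is to show $E = L_1$. Supposing not, pick $\ell \in L_1 \setminus E$. Since $L_1$ is immediate over $E$, the set $v(\ell - E)$ has no largest element, so $\ell$ is the pseudolimit of a divergent pc-sequence $(a_\rho)$ in $E$; as $\ell$ is $\d$-algebraic over $E$, Lemma~\ref{adh11.3.8} (with $L_1$ as the ambient extension) shows that some nonzero differential polynomial over $E$ annihilates asymptotically a pc-sequence equivalent to $(a_\rho)$, and taking one of minimal complexity yields a minimal differential polynomial $P$ of $(a_\rho)$ over $E$. Applying Lemma~\ref{asympdalgmaxplimroot} with base $E$, once to $L_1$ and once to $L_2$ regarded as an extension of $E$ via $\iota$ (both being $\d$-valued, $\uplambda$-free, and asymptotically $\d$-algebraically maximal), I obtain $g_1 \in L_1$ with $a_\rho \pconv g_1$ and $P(g_1) = 0$, and $g_2 \in L_2$ with $\iota(a_\rho) \pconv g_2$ and $\iota(P)(g_2) = 0$; here $g_1 \notin E$ because $(a_\rho)$ is divergent in $E$. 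By Corollary~\ref{adh11.4.13}, $P \in Z(E,g_1)$ and $\iota(P) \in Z(\iota(E),g_2)$ are each of minimal complexity, so Lemma~\ref{adh11.4.8} provides an abstract immediate extension $E\langle f\rangle$ with $P(f) = 0$ that is isomorphic over $E$ to the subfield of $L_1$ generated by $g_1$ (via $f \mapsto g_1$) and that embeds over $\iota$ into $L_2$ (via $f \mapsto g_2$). Composing, I get a valued differential field embedding $E\langle g_1\rangle \to L_2$ extending $\iota$, which strictly enlarges $(E,\iota)$ — a contradiction. Hence $E = L_1$, so $\iota$ embeds $L_1$ into $L_2$ over $K$. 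Finally, $L_2$ is then an immediate $\d$-algebraic extension of $\iota(L_1) \cong L_1$, which is asymptotically $\d$-algebraically maximal, so $\iota(L_1) = L_2$, and $\iota$ is an isomorphism over $K$.

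I expect the step requiring the most care to be the second application of Lemma~\ref{adh11.4.8}, matching the two roots $g_1$ and $g_2$: to embed $E\langle f\rangle$ into $L_2$ over $\iota$ one must check that $v(a - g_1) = v(\iota(a) - g_2)$ for every $a \in E$ (under the identification of value groups), and this is exactly where the divergence of $(a_\rho)$ in $E$ is used, since then no $a \in E$ is a pseudolimit of $(a_\rho)$, so $v(a - a_\rho)$ is eventually constant and this common value equals both $v(a - g_1)$ and $v(\iota(a) - g_2)$ because $\iota$ preserves the valuation. The passage to a henselian $E$ via henselizations is the other structural point, and it is what permits Lemma~\ref{asympdalgmaxplimroot} to be invoked at all.
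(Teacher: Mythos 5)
Your proof is correct and follows essentially the same strategy as the paper's: a Zorn's lemma argument producing a maximal partial $K$-morphism, the henselization trick to make the maximal domain henselian so that Lemma~\ref{asympdalgmaxplimroot} applies, and then Lemma~\ref{adh11.4.8} to match the two pseudolimit-roots and extend, yielding a contradiction. The one cosmetic difference is that you run Zorn on partial embeddings $\iota\colon E\to L_2$ with $E\subseteq L_1$ and then argue surjectivity separately at the end, whereas the paper runs Zorn symmetrically on isomorphisms $F_0\cong_K F_1$ between subfields of the two extensions; these are interchangeable, and the paper's closing ``hence $F_1=L_1$'' is exactly your final maximality observation. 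One small overstatement: you write that ``the same chain of implications applies to every valued differential subfield $E$,'' but the last links of your chain ($\uplambda$-free $\Rightarrow$ newtonian $\Rightarrow$ henselian) use asymptotic $\d$-algebraic maximality, which an arbitrary such $E$ need not have; only the $\d$-valued/$\upomega$-free/rational-asymptotic-integration part carries over. You correct for this implicitly by proving henselianity of the maximal $E$ separately via henselization, so the proof stands, but the sentence as written claims more than is true.
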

\begin{proof}
Let $L_0$ and $L_1$ be immediate $\d$-algebraic extensions of $K$ that are asymptotically $\d$-algebraically maximal.
Note that they are both $\upomega$-free and $\d$-valued by Theorem~\ref{adh13.6.1} and Lemma~\ref{adh9.1.2}, respectively.
By Zorn's lemma we have a maximal isomorphism $\mu \colon F_0 \cong_K F_1$ between asymptotic valued differential subfields $F_i\supseteq K$ of $L_i$ for $i=0,1$, where ``maximal'' means that $\mu$ does not extend to an isomorphism between strictly larger such subfields.
As before, $F_i$ is $\upomega$-free and $\d$-valued for $i=0,1$.
Next, they must be henselian, because the henselization of $F_i$ in $L_i$ is an algebraic field extension of $F_i$, and thus a valued differential subfield of $L_i$ that is $\upomega$-free and $\d$-valued for $i=0,1$.

Now suppose towards a contradiction that $F_0 \neq L_0$ (equivalently, $F_1 \neq L_1$).
Then $F_0$ is not asymptotically $\d$-algebraically maximal, so we have a divergent pc-sequence $(a_\rho)$ in $F_0$ with a minimal differential polynomial $P$ over $F_0$.
Then Lemma~\ref{asympdalgmaxplimroot} gives $f_0 \in L_0$ with $a_\rho \pconv f_0$ and $P(f_0)=0$, and $f_1 \in L_1$ with $\mu(a_\rho) \pconv f_1$ and $P^\mu(f_1)=0$.
Now Lemma~\ref{adh11.4.8} gives an isomorphism $F_0\langle f_0 \rangle \cong F_1\langle f_1 \rangle$ extending $\mu$,
and we have a contradiction. Thus $F_0=L_0$ and
hence $F_1=L_1$. 
\end{proof}

\section*{Acknowledgements}

This research was supported by an NSERC Postgraduate Scholarship. %, award PGSD3-471210-2015.
Thanks are due to Lou van~den~Dries for many helpful comments on a draft of this paper.

\end{document}